\pdfoutput=1
\documentclass[a4paper,12pt]{amsart}
\usepackage{geometry}
\geometry{legalpaper, portrait, margin=1.5in}


\usepackage{amsmath}
\usepackage{amssymb}
\usepackage{amsfonts}
\usepackage{amsthm} 
\usepackage{mathtools}
\usepackage{bm}

\usepackage{quoting}
\usepackage{enumerate}
\usepackage{graphics}
\usepackage{algorithm}
\usepackage{comment} 
\usepackage[normalem]{ulem} 

\usepackage{etoolbox}
\usepackage{changepage} 

\usepackage{fancyhdr}
\usepackage[dvipsnames]{xcolor}

\usepackage[parfill]{parskip} 
\setlength{\parindent}{0pt} 
\raggedbottom
\usepackage[labelfont=bf]{caption} 
\usepackage{subcaption}
\usepackage{framed}
\usepackage{setspace}
\usepackage[none]{hyphenat} 
\usepackage{tikz-cd} 
\usepackage{url}
\urlstyle{sf}
\usepackage{listings}
\usepackage{color}
\usepackage{xspace}  
\usepackage{booktabs}
\usepackage{hyperref} 
    






\newtheoremstyle{test}{4pt}{4pt}{\itshape}{}{\bfseries}{.}{0.3cm}{}

\theoremstyle{test} 
\newtheorem{thm}{Theorem}[subsection]  	
\newtheorem{lem}[thm]{Lemma} 	 	
\newtheorem{prop}[thm]{Proposition}

\newtheorem{thmx}{Theorem}
\newtheorem*{thmx-non}{Theorem}
\theoremstyle{test} 
\newtheorem{defn}[thm]{Definition}

\theoremstyle{definition}

\theoremstyle{remark}
\newtheorem*{rem}{Remark}

\usepackage{enumitem}
\newlist{steps}{enumerate}{1}
\setlist[steps, 1]{wide=0pt, leftmargin=\parindent, label=Step \arabic*:, font=\bfseries}

\newcommand{\eps}{\varepsilon}

\linespread{1.25}
\makeatletter
\def\@tocline#1#2#3#4#5#6#7{\relax
  \ifnum #1>\c@tocdepth 
  \else
    \par \addpenalty\@secpenalty\addvspace{#2}%
    \begingroup \hyphenpenalty\@M
    \@ifempty{#4}{%
      \@tempdima\csname r@tocindent\number#1\endcsname\relax
    }{%
      \@tempdima#4\relax
    }%
    \parindent\z@ \leftskip#3\relax \advance\leftskip\@tempdima\relax
    \rightskip\@pnumwidth plus4em \parfillskip-\@pnumwidth
    #5\leavevmode\hskip-\@tempdima
      \ifcase #1
       \or\or \hskip 2em \or \hskip 2em \else \hskip 3em \fi%
      #6\nobreak\relax
    \dotfill\hbox to\@pnumwidth{\@tocpagenum{#7}}\par
    \nobreak
    \endgroup
  \fi}
\makeatother


\begin{document} 
\sloppy 

\title{A Quantitative Guessing Geodesics Theorem}
\author{Talia Shlomovich}


\begin{abstract}
    We present a quantitative version of Guessing Geodesics, which is a well-known theorem that provides a set of conditions to prove hyperbolicity of a given metric space. This version adds to the existing result by determining an explicit estimate of the hyperbolicity constant. As a sample application of this result, we estimate the hyperbolicity constant for a particular hyperbolic model of \(\mathrm{CAT}(0)\) spaces known as the curtain model.
\end{abstract}

\maketitle


\section{Introduction}
One of the most broadly applicable ways of showing that a given metric space is Gromov hyperbolic is the \emph{Guessing Geodesics} theorem, which has appeared in various forms. 
This theorem has been presented in several ways: Bowditch \cite[Proposition~3.1]{B}, Masur-Schleimer \cite[Theorem~3.15]{MS}, and Hamenst\"{a}dt \cite[Proposition~3.5]{UH}. The latter is closest to the version we use in this paper, and is stated below as follows.

\begin{thmx}[Guessing Geodesics, \cite{UH}]
    Let \((X,d)\) be a geodesic metric space. Assume that for some constant \(D > 0\) there are paths \(\eta_{xy} = \eta(x, y) : [0, 1] \to X\) from \(x\) to \(y\), for each pair \(x,y \in X\). Suppose also that the following conditions are satisfied:
\begin{enumerate}[label=(G\arabic*)]
    \item The diameter \(\mathrm{diam}(\eta_{xy}) \leq D\).
    \item For any \(s \leq t\), the Hausdorff distance between the paths \(\eta_{xy}[s, t]\)
    and \(\eta(\eta_{xy}(s), \eta_{xy}(t))\) is at most \(D\).
    \item For any \(x, y, z \in X\), the set \(\eta_{xy}\) is contained in the \(D\)-neighbourhood
    of \(\eta_{xz} \cup \eta_{zy}\).
\end{enumerate}
Then \((X,d)\) is \(\delta\)-hyperbolic for some \(\delta = \delta(D) > 0\).
\end{thmx}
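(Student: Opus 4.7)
The plan is to establish $\delta$-hyperbolicity of $(X,d)$ via the slim-triangles characterization, carefully tracking all constants in order to obtain an explicit $\delta = \delta(D)$. The strategy has three components: first show that the paths $\eta_{xy}$ are uniformly Hausdorff-close to actual geodesics $[x,y]$; then use (G3) to get slimness for $\eta$-triangles; finally transfer slimness to geodesic triangles and convert to $\delta$-hyperbolicity with explicit constants.

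\textbf{Step 1 (paths close to geodesics).} The first and most involved step is to show that for every $x,y \in X$, the Hausdorff distance between $\eta_{xy}$ and any geodesic $[x,y]$ is at most some $D_1 = D_1(D)$. The idea is a Morse-type argument using only the three axioms. Given a point $p \in \eta_{xy}$, use (G2) iteratively to localize: the subpath of $\eta_{xy}$ containing $p$ is Hausdorff-close to $\eta(\eta_{xy}(s),\eta_{xy}(t))$ for parameters $s \le t$, and (G3) lets us insert the nearest point $q \in [x,y]$ as a ``third vertex'' so that $p$ must lie near $\eta_{xq} \cup \eta_{qy}$. Repeating this bisection, combined with (G1) to bound the small-scale diameter, forces $p$ close to $[x,y]$. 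A symmetric argument shows the reverse inclusion, giving the Hausdorff bound $D_1(D)$.

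\textbf{Step 2 (slim $\eta$-triangles and transfer).} Condition (G3) is exactly the statement that $\eta$-triangles are $D$-slim. Combining with Step 1, every geodesic triangle in $X$ with vertices $x,y,z$ is $\sigma$-slim with $\sigma = 2D_1 + D$: each geodesic side lies in a $D_1$-neighbourhood of the corresponding $\eta$-path, which in turn lies in the $D$-neighbourhood of the union of the other two $\eta$-paths, which lie in $D_1$-neighbourhoods of the remaining geodesic sides.

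\textbf{Step 3 (slim to hyperbolic).} Invoke the standard quantitative equivalence between $\sigma$-slim triangles and $\delta$-hyperbolicity (in the four-point or Gromov-product sense), which yields $\delta \le c\sigma$ for an explicit absolute constant $c$. Chaining the estimates gives $\delta$ as an explicit function of $D$.

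\textbf{Main obstacle.} The technical heart is Step 1. The iterative/bisection argument that controls the drift of $\eta_{xy}$ away from $[x,y]$ must be arranged so that constants do not blow up exponentially under recursion; a naive telescoping of (G2) across a dyadic subdivision of the geodesic typically produces a constant proportional to the number of subdivision steps, so the recursion has to be closed off by a local-to-global argument (e.g.\ showing that any sufficiently large excursion of $\eta_{xy}$ from $[x,y]$ forces a contradiction with (G1) at some intermediate scale). Getting a tight, polynomial-in-$D$ bound on $D_1$ is where most of the real work of the quantitative theorem lies; the passage from slim triangles to $\delta$ in Step 3 is routine by comparison.
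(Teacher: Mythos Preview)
Your overall architecture matches the paper's: bound the Hausdorff distance between $\eta_{xy}$ and geodesics by some $D_1$, then combine with (G3) to get slim geodesic triangles with constant $2D_1+D$. Steps~2 and~3 are exactly how the paper assembles the final constant.

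The gap is in Step~1, which you correctly flag as the crux but do not resolve. Your sketch (insert the nearest point $q\in[x,y]$, apply (G3), recurse) does not by itself close: after one step $p$ lies near $\eta_{xq}\cup\eta_{qy}$, but you are no nearer to $[x,y]$ than before, and the suggested ``contradiction with (G1) at some intermediate scale'' is not the mechanism that actually works. The paper's device is different. First, for \emph{any} coarsely Lipschitz path $\lambda:[0,2^n]\to X$, iterated use of (G3) at dyadic midpoints of $\lambda$ shows that $\eta(\lambda(0),\lambda(2^n))$ lies in the $\bigl(D\log_2\ell(\lambda)+O(D)\bigr)$-neighbourhood of $\lambda$, with (G1) used only at the finest scale; this is precisely the logarithmic loss you anticipated. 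The point is \emph{what} to feed into this estimate. Let $r=\max_t d(\eta_{xy}(t),\gamma)$ be the maximal excursion of $\eta_{xy}$ from the geodesic $\gamma=[x,y]$, realised at $\eta_{xy}(t)$. Choose $t_1<t<t_2$ with $d(\eta_{xy}(t_i),\eta_{xy}(t))$ just above $2r$, let $\gamma(s_i)$ be nearest points on $\gamma$ (so $d(\eta_{xy}(t_i),\gamma(s_i))\le r$), and let $\lambda$ be the concatenation of a geodesic from $\eta_{xy}(t_1)$ to $\gamma(s_1)$, the arc $\gamma|_{[s_1,s_2]}$, and a geodesic from $\gamma(s_2)$ to $\eta_{xy}(t_2)$. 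Then $\ell(\lambda)=O(r)$, so the dyadic estimate together with (G2) puts $\eta_{xy}(t)$ in the $\bigl(D\log_2(O(r))+O(D)\bigr)$-neighbourhood of $\lambda$; and the choice of $t_i$ guarantees that the point of $\lambda$ nearest to $\eta_{xy}(t)$ actually lies on $\gamma$, not on the side legs. Hence
\[
r \le f(r) := D\log_2(Cr+C') + C'',
\]
and since $f$ grows logarithmically this forces $r\le\kappa$ for any $\kappa$ with $f(\kappa)<\kappa$. This self-referential inequality is the local-to-global step you were looking for; it gives $D_1$ of order $D\log D$, not a plain polynomial in $D$. The reverse inclusion ($\gamma$ in a neighbourhood of $\eta_{xy}$) is then a short density argument.
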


Roughly, the theorem states that in order to show hyperbolicity, it suffices to find a system of paths that ``look'' like geodesics in a hyperbolic space. 
This idea has been used in various places: Bowditch and Hamenst\"adt used it to show that curve complexes are hyperbolic \cite{B,UH}, Dru\c{t}u--Sapir to show that relatively hyperbolic groups are asymptotically tree-graded \cite[Section~9]{DS} and Sisto to characterise hyperbolically embedded subgroups \cite{S, DGO}.
However, the existing statements of the Guessing Geodesics theorem do not give explicit hyperbolicity constants (though in Hensel--Przytycki--Webb \cite{HPW}, Guessing Geodesics was used to prove that the curve graph is \(17\)-hyperbolic).

Recall that a (not necessarily geodesic) metric space \(X\) is \(\bm{\delta}\)\textbf{--hyperbolic} in the sense of Gromov if: 
        \[
        d(x,z) + d(y,w) \leq \max\{d(x,w)+ d(y,z), d(x,y) + d(z,w)\} + \delta
        \]
for all \(x,y,z,w \in X\).

The first quantitative Guessing Geodesics result provides a set of conditions to prove hyperbolicity with an explicit hyperbolicity constant in geodesic metric spaces. To avoid confusion, throughout this paper we use \(\delta\) to refer to a hyperbolicity constant, and \(\delta'\) to refer to a quasi-hyperbolicity constant.

\begin{thmx}[Quantitative Guessing Geodesics]
\label{thmx:a}
    Let \((X,d)\) be a geodesic metric space. Assume that for some constant \(D > 0\) there are \(D\)-coarsely connected paths \(\eta_{xy}  = \eta(x,y) : [0,1] \to X\) from \(x\) to \(y\), for each pair \(x,y \in X\). Suppose \(X\) also satisfies the following conditions:
    \begin{enumerate}[label=(G\arabic*)]
        \item The diameter \(\mathrm{diam}(\eta_{xy}) \leq \frac{D d(x,y)}{2}\),
        \item For any \(s \leq t\), \(d_{\mathrm{Haus}}(\eta_{xy}[s,t], \eta(\eta_{xy}(s),\eta_{xy}(t)) \leq D\),
        \item For any \(x,y,z \in X\), \(\eta_{xy} \subset \mathcal{N}_D(\eta_{xz} \cup \eta_{zy})\).
    \end{enumerate}

Also define \(f(x) := D\log_2(8x+2D)+ \frac{3}{2}D\).

Then every geodesic triangle in \(X\) is \(\delta\)--thin, where \(z\) is any number such that \(f(z) < z\) and \(\delta = 4f(z)+2D\).
\end{thmx}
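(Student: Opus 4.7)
The strategy I would pursue is to convert the path-system axioms (G1)--(G3) into a slim-triangle property for genuine geodesics in two stages: first showing that each $\eta_{xy}$ fellow-travels any geodesic $[x,y]$ quantitatively, then using (G3) to transfer thinness from $\eta$-triangles to geodesic triangles.

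For the first stage, I would define $R(\ell) := \sup\{ d_{\mathrm{Haus}}([x,y], \eta_{xy}) : d(x,y) \leq \ell\}$, where the supremum also ranges over choices of geodesic $[x,y]$, and prove by strong induction on $\ell$ that $R(\ell) \leq f(\ell)$. The inductive step proceeds by bisection: let $m$ be the midpoint of $[x,y]$, and apply (G3) with third vertex $m$ to obtain $\eta_{xy} \subset \mathcal{N}_D(\eta_{xm} \cup \eta_{my})$. Since $d(x,m) = d(m,y) = \ell/2$, the inductive hypothesis places $\eta_{xm}$ within $f(\ell/2)$ of $[x,m]$ and $\eta_{my}$ within $f(\ell/2)$ of $[m,y]$, which together constitute $[x,y]$. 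The reverse containment $[x,y] \subset \mathcal{N}(\eta_{xy})$ is obtained analogously, invoking (G2) to compare subpaths $\eta_{xy}[s,t]$ with $\eta(\eta_{xy}(s), \eta_{xy}(t))$ and using $D$-coarse connectedness to interpolate between discrete points on $\eta_{xy}$. Iterating a recursion of the form $R(\ell) \leq R(\ell/2) + D + O(D)$ exactly $\log_2$ many times produces the $D \log_2(\cdot)$ term in $f$; the additive constant $\tfrac{3}{2}D$ absorbs the errors coming from (G2) and coarse connectedness at the base of the recursion.

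The base case is precisely where the hypothesis $f(z) < z$ enters. When $d(x,y) \leq z$, condition (G1) forces $\eta_{xy}$ to have diameter at most $Dz/2$, and the geodesic $[x,y]$ has length at most $z$, so both lie in a bounded neighbourhood of $x$ whose radius is controlled by $z$. The strict inequality $f(z) < z$ is what allows this trivial diameter estimate to be dominated by $f(z)$, closing the bootstrap $R(z) \leq f(z)$ and making the induction well-posed.

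For the second stage, given a geodesic triangle on $\{x,y,z\}$ and a point $p \in [x,y]$, I would chain the estimates: apply the fellow-traveling lemma to find $q \in \eta_{xy}$ within $f(z)$ of $p$; apply (G3) to find $q' \in \eta_{xz} \cup \eta_{zy}$ within $D$ of $q$; apply the fellow-traveling lemma once more to push $q'$ onto $[x,z] \cup [z,y]$ within $f(z)$. Summing gives $d(p, [x,z] \cup [z,y]) \leq 2f(z) + D$; the factor $4f(z) + 2D$ in the statement comes from a more careful accounting in which both the forward and reverse Hausdorff estimates are used at each pass, doubling the $f(z)$ contribution. The main obstacle is the bookkeeping in Stage 1, namely tracking precisely how the three constants from (G1)--(G3), together with the coarse-connectedness parameter, propagate through a logarithmic-depth recursion to yield exactly $f(x) = D \log_2(8x + 2D) + \tfrac{3}{2}D$; the specific form of the coefficient $8$ inside the logarithm and the $\tfrac{3}{2}D$ outside will be forced by exactly which sub-path estimates are invoked at each bisection and how the coarse-connected interpolation is discretized.
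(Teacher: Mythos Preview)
Your proposal has a genuine gap: the recursion you set up yields a bound on $R(\ell)$ that grows with $\ell$, not a uniform constant, yet in Stage~2 you silently replace $f(\ell)$ by the fixed number $f(z)$. Iterating $R(\ell) \le R(\ell/2) + O(D)$ down to the base scale $z$ gives at best $R(\ell) \le R(z) + D\log_2(\ell/z)$, so for a triangle with side-length $\ell \gg z$ your fellow-travelling constant is of order $D\log_2 \ell$, not $f(z)$, and the final chaining to $4f(z)+2D$ is unjustified. Your base case does not close either: from (G1) and $d(x,y)\le z$ you obtain $d_{\mathrm{Haus}}([x,y],\eta_{xy}) \le Dz/2$, but the hypothesis $f(z)<z$ points the wrong way (for $D\ge 2$ it gives $f(z)<z\le Dz/2$), so it cannot be used to conclude $R(z)\le f(z)$.

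The paper's argument avoids this by never recursing on the length of the geodesic. Instead it takes the point $\eta_{xy}(t)$ at \emph{maximal} distance $r$ from the geodesic $\gamma=[x,y]$ and builds an auxiliary $1$-Lipschitz path $\lambda$ near that point, consisting of two short geodesics of length at most $r$ joined by a sub-arc of $\gamma$, with total length $\ell(\lambda)\le 8r+2D$. A separate subdivision lemma (iterating (G3) along $\lambda$, using (G1) at the finest scale, and then (G2) once) shows that the relevant piece of $\eta_{xy}$ lies within $D\log_2(\ell(\lambda))+\tfrac{3}{2}D$ of $\lambda$; and $\lambda$ is arranged so that the nearest point of $\lambda$ to $\eta_{xy}(t)$ actually lies on $\gamma$. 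This yields the self-referential inequality $r \le f(r)$. Since $f$ is increasing and eventually dominated by the identity, the assumption $f(z)<z$ forces $r<z$ and hence $r\le f(r)\le f(z)$, which is the desired uniform bound. The number $z$ is therefore not a base case for an induction on length but the threshold at which the logarithmic function $f$ drops below the identity, trapping $r$ beneath it.
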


However, one might be interested in an explicit hyperbolicity estimate for non-geodesic hyperbolic spaces - one such example is the free factor complex of a free group \(F_n\). This motivates a version of the theorem extending beyond geodesic spaces, which forms the main focus of this paper.

\begin{thmx}[Rough Guessing Geodesics]
    \label{thmx:mainresult}
    Let \((X,d)\) be a \((1,q)\)--quasi-geodesic metric space.
    Assume that for some constant \(D > 0\) there are \(D\)-coarsely connected paths \(\eta_{xy}  = \eta(x,y) : [0,1] \to X\) from \(x\) to \(y\), for each pair \(x,y \in X\). Suppose \(X\) also satisfies the following conditions:
    \begin{enumerate}[label=(G\arabic*)]
        \item The diameter \(\mathrm{diam}(\eta_{xy}) \leq \frac{D d(x,y)}{2}\),
        \item For any \(s \leq t\), \(d_{\mathrm{Haus}}(\eta_{xy}[s,t], \eta(\eta_{xy}(s),\eta_{xy}(t)) \leq D\),
        \item For any \(x,y,z \in X\), \(\eta_{xy} \subset \mathcal{N}_D(\eta_{xz} \cup \eta_{zy})\).
    \end{enumerate}
    
    Then every \((1,q)\)--quasi-geodesic triangle in \(X\) is \(\delta'\)-thin, and \((X,d)\) is \(\delta\)--hyperbolic, where \(\delta' \leq 72qD^{\frac{5}{4}} + 2D + 3q\) and \(\delta = 56\delta' + 6q\).
\end{thmx}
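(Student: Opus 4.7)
The plan is to reduce Theorem~\ref{thmx:mainresult} to the geodesic case already handled by Theorem~\ref{thmx:a}. First I would construct a geodesic metric space $\widehat{X}$ quasi-isometric to $X$ with explicit constants depending only on $q$: take a maximal $q$-separated net $N \subset X$ and form a metric graph on vertex set $N$ by joining $u,v \in N$ with an edge of length $d(u,v)$ whenever $d(u,v) \leq 3q$ (say). The resulting graph $\widehat{X}$ is geodesic, and a closest-point projection $\pi : X \to N$ together with the inclusion $N \hookrightarrow \widehat{X}$ gives a $(1, Cq)$--quasi-isometry between $X$ and $\widehat{X}$ for an absolute constant $C$.

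Next I would lift the system $\{\eta_{xy}\}$ to a system $\{\widehat{\eta}_{xy}\}$ on $\widehat{X}$ by snapping each path to $N$ via $\pi$ and joining consecutive images by edges. Since the projection only distorts distances by $O(q)$ and the original paths are $D$--coarsely connected, the lifted paths satisfy (G1)-(G3) on $\widehat{X}$ with a modified constant $\widehat{D} = D + C'q$. Applying Theorem~\ref{thmx:a} to $\widehat{X}$ with this $\widehat{D}$, and choosing $z \asymp \widehat{D}^{5/4}$ (which satisfies $f(z) < z$ since $\widehat{D}\log_2(8z + 2\widehat{D}) = O(\widehat{D}\log\widehat{D}) \ll \widehat{D}^{5/4}$ once $\widehat{D}$ is moderate), yields that every geodesic triangle in $\widehat{X}$ is $\widehat{\delta}$--thin for an explicit $\widehat{\delta}$ of order $\widehat{D}^{5/4}$.

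From there, the thinness of $(1,q)$--quasi-geodesic triangles in $X$ would follow in two steps. A $(1,q)$--quasi-geodesic in $X$ pushes forward to a $(1, O(q))$--quasi-geodesic in $\widehat{X}$; I would then invoke a quantitative version of the Morse lemma in the hyperbolic geodesic space $\widehat{X}$ to bound the Hausdorff distance between each side of the quasi-geodesic triangle and a genuine $\widehat{X}$--geodesic joining the same endpoints, and combine with $\widehat{\delta}$--thinness of geodesic triangles to obtain $\delta'$--thinness in $\widehat{X}$. Pulling back through the quasi-isometry, and absorbing the additive $O(q)$ terms, gives the stated bound $\delta' \leq 72 q D^{5/4} + 2D + 3q$ for $(1,q)$--quasi-geodesic triangles in $X$. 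Finally, the four-point $\delta$--hyperbolicity of $(X,d)$ with $\delta = 56\delta' + 6q$ follows from a standard passage (applied inside $\widehat{X}$ and transported back across the quasi-isometry) converting uniform thinness of $(1,q)$--quasi-geodesic triangles into the four-point condition.

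The main obstacle will be tracking the constants so the final bounds take the announced form. The exponent $5/4$ is not robust: it comes from balancing the linear $D$ factor against the logarithmic term in the function $f$ of Theorem~\ref{thmx:a}, and any slack in the choice of $z$, in the quasi-isometry constants of the net approximation, or in the Morse constants for $\widehat{X}$, immediately degrades it. For this reason I expect the delicate step to be the explicit Morse estimate: I would need the stability of quasi-geodesics in $\widehat{X}$ in a quantitative form whose error depends polynomially on $\widehat{\delta}$ and linearly on $q$, rather than the usual existence statement, so that the composition of bounds still closes up with coefficients $72$, $56$, and $6$.
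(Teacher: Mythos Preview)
Your route is genuinely different from the paper's, and the difference is not cosmetic: it is precisely what prevents your constants from closing up.

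The paper never builds an auxiliary geodesic space and never invokes the Morse lemma. Instead, Proposition~\ref{prop:guessinggeodesics} runs the whole Guessing Geodesics argument directly for $(q_1,q_2)$--quasi-geodesics in $X$: the subdivision step (Part~1) and the concatenated path $\lambda = \chi_1 * \gamma|_{[s_1,s_2]} * \chi_2$ (Parts~2--4) show that an arbitrary $(1,q)$--quasi-geodesic $\gamma$ from $x$ to $y$ lies within a uniform neighbourhood of $\eta_{xy}$, with the radius governed by the self-referential inequality $r \le f(r)$. Specialising to $(q_1,q_2)=(1,q)$ gives $\delta' = 4\kappa + 2D + 3q$, and Lemma~\ref{lem:kappa1} with $n=8$ yields $\kappa \le 18qD^{5/4}$, hence $\delta' \le 72qD^{5/4}+2D+3q$. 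The passage to the four-point condition is the explicit Lemma~\ref{lem:quasihyp}, argued via thin rough-geodesic quadrilaterals and a fellow-travelling bound, producing exactly $\delta = 56\delta' + 6q$. Nothing here is ``standard''; both numbers $72$ and $56$ are outputs of these specific computations.

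Your plan inserts two additional layers of estimation, and each one is lossy in a way that breaks the announced bounds. First, lifting the system $\{\eta_{xy}\}$ through the net projection replaces $D$ by some $\widehat D \ge D + C'q$; Theorem~\ref{thmx:a} then feeds back $\widehat D^{5/4}$, not $D^{5/4}$, so already the leading term is wrong. Second, and more seriously, the Morse step you rely on has constants that depend on the hyperbolicity of $\widehat X$: even for $(1,O(q))$--quasi-geodesics the stability radius is of order $\widehat\delta + O(q)$, and since $\widehat\delta$ is itself of order $\widehat D^{5/4}$, composing with $\widehat\delta$-thinness at best multiplies the leading coefficient and cannot return $72qD^{5/4}$. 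You correctly flag this as the delicate point, but what you describe is the \emph{shape} of Morse estimate you would need, not one that is available; no standard quantitative Morse lemma gives error linear in $q$ and independent of $\widehat\delta$. The paper sidesteps the whole issue by comparing quasi-geodesics to the $\eta$-paths directly, so the only place a logarithm enters is in the bound for $\kappa$, and no stability iteration is ever needed.
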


Our most general Guessing Geodesics result applying to quasi-geodesic metric spaces is given by Proposition~\ref{prop:guessinggeodesics}. Here the quasi-hyperbolicity constant \(\delta'\) is expressed in terms of an auxiliary quantity \(\kappa\), and we provide general estimates for the value of \(\kappa\) in Lemma~\ref{lem:kappa1}. The statement of Theorem~\ref{thmx:mainresult} above was obtained from Proposition~\ref{prop:guessinggeodesics} by applying Lemma~\ref{lem:kappa1} with \(n=8\). 

As a sample application of our methods, we determine an estimate for the hyperbolicity constant of the \textbf{curtain model}, which is a non-geodesic hyperbolic space introduced in \cite{PSZ}. We show that for any \(\mathrm{CAT}(0)\) space \(X\), the curtain model of \(X\) is \((1.19\times10^7)\)--hyperbolic. 

Whilst we provide a general bound for these hyperbolicity constants, in specific contexts it may be more optimal to use one of the intermediate results from Section~\ref{section:guessinggeodesics} and determine an estimate directly. 
By doing so, one may obtain a significantly improved estimate of the hyperbolicity constant, which is \((5.56 \times 10^5)\)--hyperbolic.

\section*{Acknowledgements}

I am truly grateful to Harry Petyt for his invaluable mentorship throughout this project, and without whom this paper wouldn't have been possible. 
Harry generously dedicated a lot of time to engaging in detailed discussions with me and offering insightful feedback on the draft versions of this paper. His input was crucial in refining the ideas throughout, especially the arguments in Section~\ref{section:kappa}, which are particularly elegant.

I am also very thankful to Davide Spriano for his support and willingness to help on short notice, which greatly benefited my understanding and progress.

Additionally, I would like to thank the organisers of the Undergraduate Summer Research Internship at the University of Oxford, and once again give my appreciation to my supervisors Harry and Davide for this opportunity.

\section{Guessing Geodesics}
\subsection{Quantitative Guessing Geodesics}
\label{section:guessinggeodesics}

In this section, we present an effectivisation of the well-known Guessing Geodesics criterion in the setting of non-geodesic hyperbolic spaces. 
We start this section by introducing some relevant terminology.

\begin{defn}
    A metric space \(X\) is called \(\bm{c}\)\emph{--coarsely connected} if for every two points \(x,y \in X\), there exists a sequence \(x=x_0,x_1, \dots, x_n=y\) of points in \(X\) such that \(d(x_{i-1},x_i) \leq c\).
\end{defn}

Next we introduce a definition of a useful type of non-geodesic hyperbolic space called a quasi-hyperbolic metric space \cite[\S7.2.2]{Loh}. To do this, we use the thin triangles definition of hyperbolicity (Rips condition) to define a similar notion of quasi-hyperbolicity. To avoid confusion between thin / slim triangles, we use the definition of thin triangles as given below.

\begin{defn}   
    Let \(X\) be a metric space. A \((1,q)\)--quasi-geodesic triangle \(T \subset X\) with sides \(\lambda_1, \lambda_2, \lambda_3\) is \(\bm{\delta'}\)\emph{–thin} if \(\lambda_i \subset \mathcal{N}_{\delta'}(\lambda_j \cup \lambda_k)\) for all permutations \((i,j,k)\) of the indices \(1,2,3\).
\end{defn}

\begin{defn}
    \label{defn:quasihyperbolic1}
    Let \(X\) be a metric space. Suppose there is a constant \(k>0\) such that \(X\) is a \((k,k)\)--quasi-geodesic space, and for every \(q>0\), there is a constant \(\delta'= \delta'(k,q) >0\) such that each \((q,q)\)--quasi-geodesic triangle is \(\delta'\)-thin. We then say that the metric space \(X\) is \(\bm{\delta'}\)\emph{--quasi-hyperbolic}.
\end{defn}

For a path \(\lambda: [0,m] \to X\), we write \(\ell(\lambda) = m\) for the parametrisation length of the path \(\lambda\). 

We now present the main result of this paper in its most general form: the Guessing Geodesics criterion, which establishes a general quasi-hyperbolicity constant. The proof is adapted from \cite[Proposition~A.1]{PSZ}.

\begin{prop} 
\label{prop:guessinggeodesics}
    Let \(X\) be a \((q_1,q_2)\)--quasi-geodesic metric space for \(q_1,q_2>0\).
    Assume that for some constant \(D > 0\) there are \(D\)-coarsely connected paths \(\eta_{xy}  = \eta(x,y) : [0,1] \to X\) from \(x\) to \(y\), for each pair \(x,y \in X\). Suppose \(X\) also satisfies the following conditions:
    \begin{enumerate}[label=(G\arabic*)]
        \item The diameter \(\mathrm{diam}(\eta_{xy}) \leq \frac{D d(x,y)}{2}\),
        \item For any \(s \leq t\), \(d_{\mathrm{Haus}}(\eta_{xy}[s,t], \eta(\eta_{xy}(s),\eta_{xy}(t)) \leq D\),
        \item For any \(x,y,z \in X\), \(\eta_{xy} \subset \mathcal{N}_D(\eta_{xz} \cup \eta_{zy})\).
    \end{enumerate}

    Also define \(f(x) := D\log_2\left(8xq_1^3 + 7q_1^3q_2 + 2Dq_1\right) +\frac{1}{2}(q_1+q_2)D + D\).
    
    Then every \((q_1,q_2)\)--quasi-geodesic triangle in \(X\) is \(\delta'\)--thin, where
    \begin{equation*}
    \delta'(q_1,q_2,D) = 2\left(\frac{q_1^2}{2}(2\kappa + D + q_2) + q_2 + \kappa\right) + D
    \end{equation*}
    and \(\kappa =  f(z)\) for any choice of \(z \geq 0\) for which \(f(z) < z\).
\end{prop}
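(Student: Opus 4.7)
The plan is to follow the classical Guessing Geodesics strategy of Bowditch and Hamenst\"adt, adapted to the $(q_1, q_2)$-quasi-geodesic setting by tracking distortion factors throughout. The argument naturally splits into two parts: an auxiliary Morse-type lemma producing the constant $\kappa$, followed by a short deduction of thin triangles.

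First I would establish a Morse-type lemma asserting that for every pair $x, y \in X$ and every $(q_1, q_2)$-quasi-geodesic $\alpha$ from $x$ to $y$, the Hausdorff distance between $\eta_{xy}$ and $\alpha$ is at most $\kappa$. The method is to let $K$ denote a tentative upper bound for this Hausdorff distance, select a point realising (nearly) this bound, and apply a bisection argument: G2 says the subpath $\eta_{xy}[0, 1/2]$ is $D$-close to the $\eta$-path from $x$ to $\eta_{xy}(1/2)$, while G3 says that every point on $\eta_{xy}$ lies within $D$ of $\eta(x, \eta_{xy}(1/2)) \cup \eta(\eta_{xy}(1/2), y)$. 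Combining these observations with the diameter bound G1, one iterates the bisection $\log_2$ many times until the residual subpaths are short enough that G1 alone terminates the recursion. The accumulated error---with $D$ added at each G3 application and $(q_1, q_2)$-distortion picked up when comparing parametrizations of $\eta_{xy}$ against those of $\alpha$---yields a self-referential inequality $K \leq f(K)$ of precisely the stated form. Because $f$ grows only logarithmically, any $z$ with $f(z) < z$ forces $K \leq z$ (otherwise monotonicity of $f$ gives $K \leq f(K) \leq f(z) < z < K$, a contradiction), and hence $K \leq f(z) = \kappa$.

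Granted the Morse-type lemma, the thin-triangle conclusion proceeds via three short steps. For a $(q_1, q_2)$-quasi-geodesic triangle with sides $\alpha_{xy}, \alpha_{xz}, \alpha_{zy}$ and a point $p \in \alpha_{xy}$, the lemma yields $p' \in \eta_{xy}$ with $d(p, p') \leq \kappa$; G3 yields $q' \in \eta_{xz} \cup \eta_{zy}$ with $d(p', q') \leq D$; and the Morse lemma applied to the relevant side yields $q \in \alpha_{xz} \cup \alpha_{zy}$ with $d(q', q) \leq \kappa$. This already gives a baseline $d(p, \alpha_{xz} \cup \alpha_{zy}) \leq 2\kappa + D$, and the extra $\tfrac{q_1^2}{2}(2\kappa + D + q_2) + q_2$ contribution in the stated $\delta'$ is what is needed to pass between the $[0,1]$-parametrization of the $\eta$-paths and the $(q_1, q_2)$-parametrization of the quasi-geodesic sides at each hop.

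The main obstacle is the bisection step inside the Morse-type lemma. In the geodesic setting, parametrization midpoints coincide with distance midpoints and each bisection halves both simultaneously; in the quasi-geodesic setting these two notions differ by the factors $q_1, q_2$, so controlling how the errors compound through the recursion requires delicate bookkeeping. This is what produces the $q_1^3$ and $q_1^3 q_2$ terms inside the logarithm of $f$, and getting these exponents right is the bulk of the work.
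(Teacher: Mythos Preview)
Your overall architecture---a Morse-type bound between $\eta_{xy}$ and an arbitrary $(q_1,q_2)$-quasi-geodesic, followed by (G3)---matches the paper, and your extraction of $\kappa$ from the inequality $r \le f(r)$ via monotonicity is correct. But the bisection sketch has a real gap. As you describe it, you halve $\eta_{xy}$ at $\eta_{xy}(1/2)$, invoke (G2) and (G3) to pass to the two half-$\eta$-paths, and iterate until (G1) makes the residuals small. This procedure only compares $\eta_{xy}$ with shorter $\eta$-paths between its own points; the quasi-geodesic $\alpha$ never enters, so no inequality involving $r = d(\eta_{xy}(t),\alpha)$ can result. (Bisecting $\alpha$ instead does bring $\alpha$ in, but then the number of steps is $\log_2 \ell(\alpha)$, which is controlled by $d(x,y)$ rather than by $r$, and again no self-referential bound appears.) The paper supplies the missing construction: having located $\eta_{xy}(t)$ at maximal distance $r$ from $\gamma$, it chooses $t_1 < t < t_2$ with $d(\eta_{xy}(t_i),\eta_{xy}(t))$ just exceeding $q_1^2 q_2 + q_1^2 r + q_2 + r$, projects $\eta_{xy}(t_i)$ to $\gamma(s_i)$, and builds an auxiliary coarsely Lipschitz path $\lambda$ by concatenating a quasi-geodesic $\eta_{xy}(t_1)\to\gamma(s_1)$, the segment $\gamma|_{[s_1,s_2]}$, and a quasi-geodesic $\gamma(s_2)\to\eta_{xy}(t_2)$. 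The subdivision argument is applied to $\lambda$, not to $\eta_{xy}$, yielding $\eta(\eta_{xy}(t_1),\eta_{xy}(t_2))$ inside a $(D\log_2\ell(\lambda) + \tfrac12(q_1+q_2)D)$-neighbourhood of $\lambda$; the choice of $t_i$ forces the nearest point of $\lambda$ to $\eta_{xy}(t)$ to lie on the $\gamma$-portion, and the length bound $\ell(\lambda) \le 8rq_1^3 + 7q_1^3 q_2 + 2Dq_1$ is exactly what produces the function $f$ in the statement.

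There is a second, smaller gap in your account of the constant. The Morse bound is asymmetric: the argument above yields only $\eta_{xy} \subset \mathcal{N}_\kappa(\gamma)$. The reverse inclusion $\gamma \subset \mathcal{N}_g(\eta_{xy})$ requires a separate density argument (Part~5 of the paper's proof) and produces the larger constant $g = \tfrac{q_1^2}{2}(2\kappa + D + q_2) + q_2 + \kappa$. The stated $\delta' = 2g + D$ then follows from a single pass $\alpha_{xy} \to \eta_{xy} \to \eta_{xz}\cup\eta_{zy} \to \alpha_{xz}\cup\alpha_{zy}$ with costs $g$, $D$, $g$; the extra term is the price of that reverse inclusion, not a reparametrisation cost accrued ``at each hop''.
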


\begin{rem}
    Some useful applications of this general theorem are:
    \begin{itemize}
    \item If \(X\) is in fact a geodesic metric space, substituting \((q_1,q_2) = (1,0)\) gives Theorem~\ref{thmx:a}. 
    \item If \(X\) is a \((1,q)\)--quasi-geodesic space, the constant \(\kappa\) is determined by \(f(x) = D\log_2(8x+7q+2D)+\frac{1}{2}\left(1+q\right)D + D\), and \(\delta' = 4\kappa + 2D + 3q\).
    \end{itemize}
\end{rem}

\begin{proof}
    For a outline of the proof, we bound the Hausdorff distance between \(\eta_{xy}\) and an arbitrary \((q_1,q_2)\)--quasi-geodesic \(\gamma\) from \(x\) to \(y\) by a constant \(g_{q_1,q_2,D}\), which we express in terms of another constant
    \(\kappa = \kappa(q_1,q_2,D)\). The value of this second constant \(\kappa\) is determined in Section~\ref{section:kappa}.
    After bounding this Hausdorff distance, (G3) implies that \((q_1,q_2)\)--quasi-geodesic triangles are \(\delta' = 2g_{q_1,q_2,D} + D\) thin.

    This proof is lengthy, so we have divided it into five parts for easier comprehension, as outlined below.
    
    \emph{Part 1:}
    Let \(\lambda : [0, 2^n] \to X\) be a \((q_1,q_2)\)-coarsely Lipschitz path between any pair of endpoints in \(X\) (not necessarily from \(x\) to \(y\)). We determine an upper bound for the constant \(c > 0\) such that \(\eta(\lambda(0), \lambda(2^n))\) is in the \(c\)-neighbourhood of \(\lambda\). To do this, we consider a process of iterative subdivision in which we compare the distance between the path \(\eta(\lambda(0), \lambda(2^n))\) and a concatenation of paths which we describe below. 
    
    The \(k\)\textsuperscript{th} step of the subdivision (where \(1 \leq k \leq n\)) involves splitting the interval \([0, 2^n]\) into \(2^k\) subintervals. These subintervals give rise to \(2^k\) paths which will be concatenated to form a new path. This collection of subpaths consist of a single path \(\eta(\lambda(0), \lambda(2^i))\) and \(2^k-1\) paths of the form:
    \[
        \eta(\lambda(j2^i), \lambda((j+1)2^i)),  \text{ where } i=n-k \text{ and } j \text{ ranges between } 1,\dots,2^k-1.
    \]
    
   Consider the base case \(k=1\). This is a single division into two paths. Let the paths \(\zeta^1, \zeta^2\) be defined as follows:
    \begin{align*}
        \zeta^1 &:= \eta\left(\lambda(0), \lambda(2^{n-1})\right) \\
        \zeta^2 &:= \eta\left(\lambda(2^{n-1}), \lambda(2^n)\right).
    \end{align*}
    
    By (G2), we have that the \(D\)-neighbourhood of \(\zeta^1 * \zeta^2\) contains \(\eta(\lambda(0), \lambda(2^n))\). After repeating \(n\) times, we have \(2^n\) paths given by 
    \begin{align*}
    \eta^1 &:= \eta\left(\lambda(0), \lambda(1)\right), \\
    &\vdots \\
    \eta^i &:= \eta\left(\lambda(i-1), \lambda(i)\right), \\
    &\vdots \\
    \eta^{2^n} &:= \eta\left(\lambda(2^n-1), \lambda(2^n)\right).
    \end{align*}

    \begin{figure}[H]
        \centering
        \includegraphics[width=0.70\textwidth]{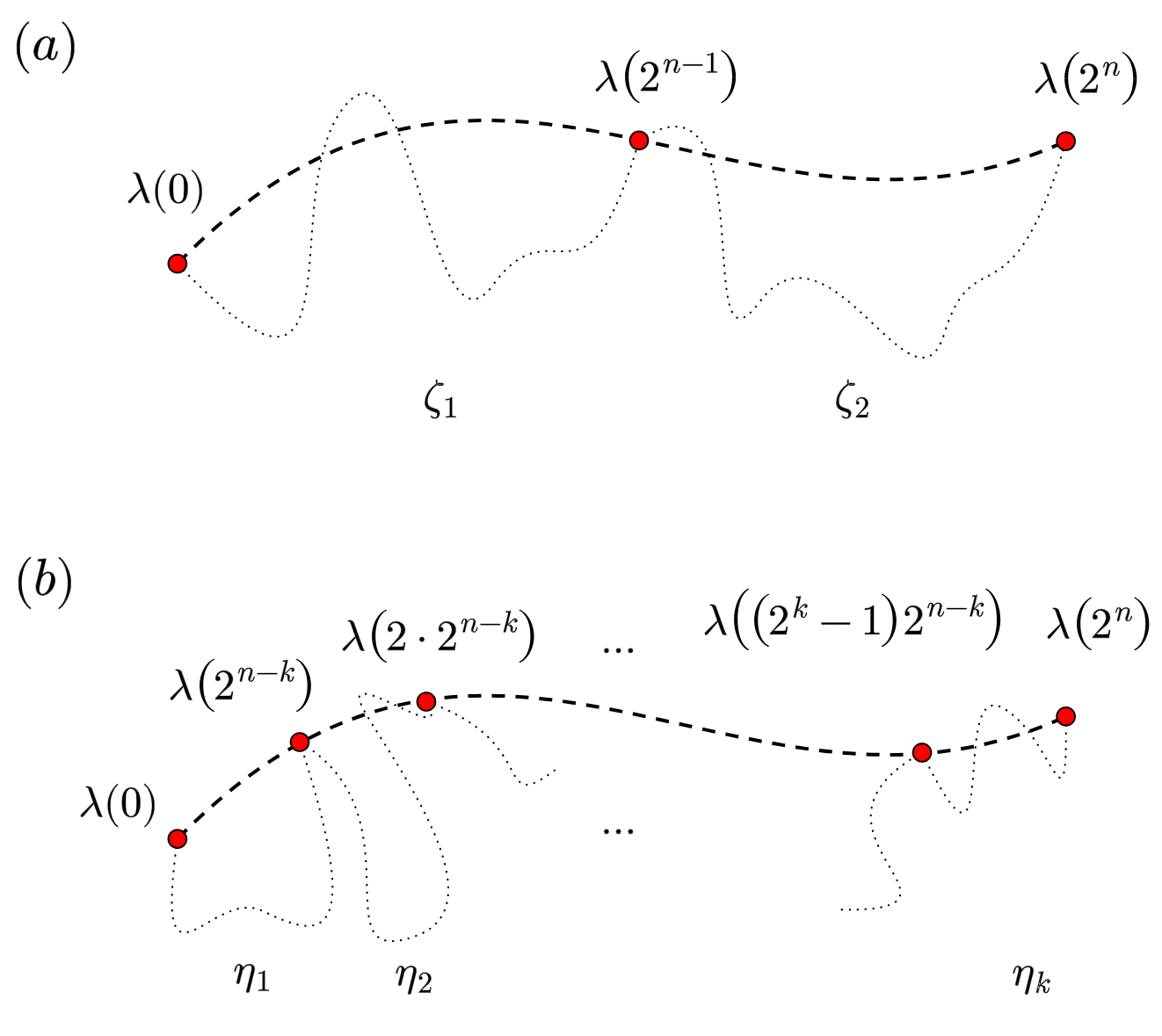}
        \caption{The \(D\)-coarsely connected paths in (a) arise from a subdivision of the interval \([0,2^n]\) forming two paths, and (b) shows a subdivision forming \(k\) paths.}
    \end{figure}
    
    By repeated application of (G2), we have that the \((nD)\)-neighbourhood of \(\eta^1 * \dots * \eta^n\) contains \(\eta(\lambda(0), \lambda(2^n))\). By (G1), each of the subpaths of the form \(\eta^k\) has diameter at most \(\frac{1}{2}(q_1+q_2)D\). Thus, \(\eta(\lambda(0), \lambda(2^n))\) is contained in the \((D\log_2\ell(\lambda) + \frac{1}{2}(q_1+q_2)D)\)-neighbourhood of \(\lambda\), since \(nD = D\log_2\ell(\lambda)\).
    
    \vspace{0.5cm}
    \emph{Part 2:}
    Now let \(\gamma: [0,n] \to X\) be a \((q_1,q_2)\)--quasi-geodesic from \(x\) to \(y\). Let \(t\) be the value of the time parameter for \(\eta_{xy}\) which maximises the distance \(d(\eta_{xy},\gamma)\), and define \(r\) to be this maximal distance, i.e. \(r := d(\eta_{xy}(t),\gamma)\). Let \(s \in [0,n]\) such that \(d(\eta_{xy}(t), \gamma(s)) = r\). 
    We distinguish between two cases:
    \begin{enumerate}[label=\textbf{Case~\arabic*:}, itemindent = 3em]
         \item If \(d(x, \eta_{xy}(t)) \leq q_1^2q_2+q_1^2r+q_2+r\), let \(t_1 = 0\).
        \item Otherwise, coarse connectivity implies that there exists \(t_1 < t\) such that \(d\left(\eta_{xy}(t_1), \eta_{xy}(t)\right) \in (q_1^2q_2+q_1^2r+q_2+r,q_1^2q_2+q_1^2r+q_2+r+D]\).
    \end{enumerate}
    We define \(t_2>t\) in the same way:
    \begin{enumerate}[label=\textbf{Case~\arabic*:}, itemindent = 3em]
        \item If \(d(\eta_{xy}(t),y) \leq q_1^2q_2+q_1^2r+q_2+r\), let \(t_2 = 0\).
        \item Otherwise, coarse connectivity implies that there exists \(t_2 > t\) such that \(d\left(\eta_{xy}(t_2), \eta_{xy}(t)\right) \in (q_1^2q_2+q_1^2r+q_2+r,q_1^2q_2+q_1^2r+q_2+r+D]\).
    \end{enumerate}

    A natural question that arises at this point is why we select the constant \(q_1^2q_2+q_1^2r+q_2+r\). Roughly speaking, we would like to choose \(t_1\) and \(t_2\) so that they correspond to points on \(\eta_{xy}\) which are sufficiently far apart, so that \(\gamma(s)\) is the closest point to \(\eta_{xy}(t)\) \(\lambda\) (which we will define to be comprised of subpaths depending on the time parameters \(t_1\) and \(t_2\)) satisfies the property \(d(\eta_{xy}(t), \lambda)=d(\eta_{xy}(t),\gamma)).\)
    
    This particular choice works out nicely in Part 4, but to get an idea we briefly illustrate where this constant appears. By our choices of \(r\) and \(t\), there are \(s_i\) such that \(d(\eta_{xy}(t_i), \gamma(s_i)) \leq r\) (if \(t_i \in \{0,1\}\), then take \(s_i=t_i\)). Observe that 
    \(
        d(\gamma(s_1), \gamma(s_2)) \leq 2(q_1^2q_2+q_1^2r+q_2+r) + 2r + 2D.
    \)
    We revisit this in Part 4.

    \vspace{0.5cm}
    \emph{Part 3:}
    Let \(\lambda : [0, \ell(\lambda)] \to X\) be the path obtained by concatenating three quasi-geodesic segments: a \((q_1,q_2)\)--quasi-geodesic \(\chi_1 = \eta(\eta_{xy}(t_1), \gamma(s_1))\), the subpath \(\gamma([s_1,s_2])\), and a \((q_1,q_2)\)--quasi-geodesic \(\chi_2 = \eta(\gamma(s_2), \eta_{xy}(t_2))\). 

    \begin{figure}[H]
    \centering
    \includegraphics[width=0.75\textwidth]{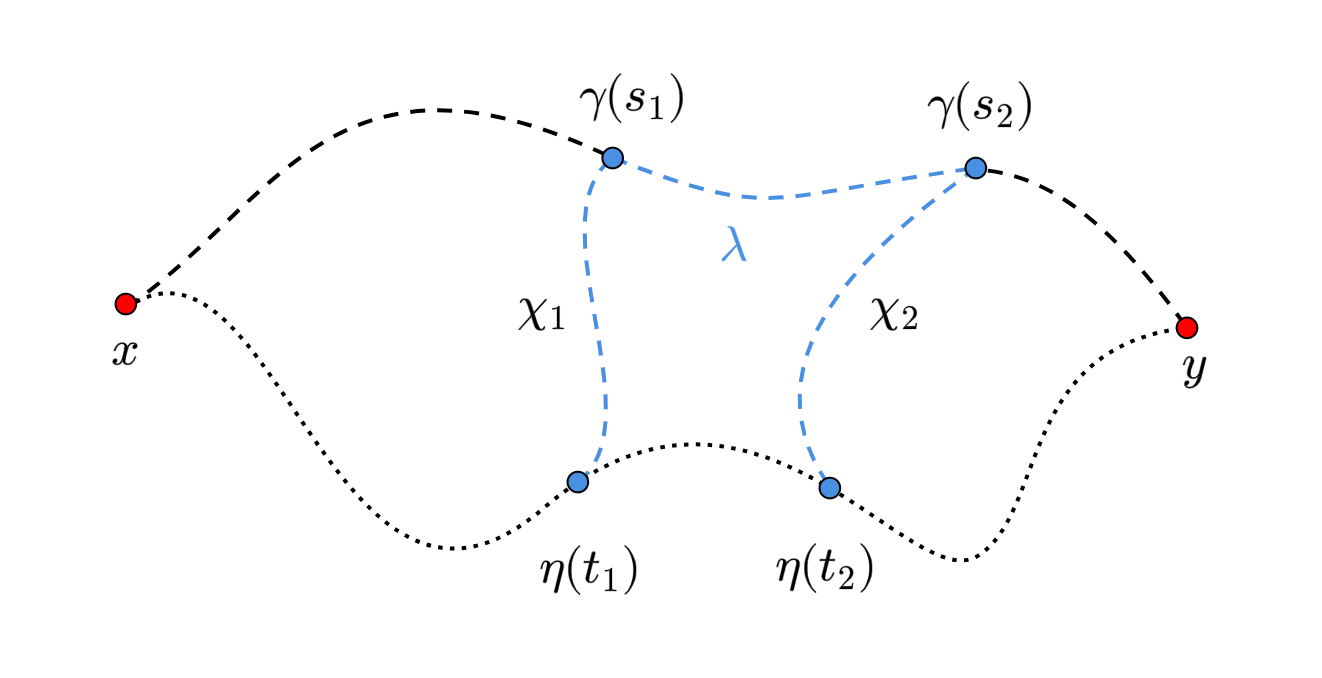}
    \caption{The path \(\lambda\) shown as a concatenation of three \((q_1,q_2)\)--quasi-geodesic paths.}
    \end{figure}
    
    We write this as
    \begin{align*}
        \lambda &= \eta(\eta_{xy}(t_1),\gamma(s_1)) * \gamma([s_1,s_2]) * \eta(\gamma(s_2),\eta_{xy}(t_2)), \\
        &= \chi_1 * \gamma([s_1,s_2]) * \chi_2.
    \end{align*}

    Since the path \(\lambda\) is a composition of three \((q_1,q_2)\)--quasi-geodesics,
    \[
    \ell(\lambda) = \ell(\chi_1) + (s_2-s_1) + \ell(\chi_2)
    \]
    and this means that
    \begin{align*}
        \frac{1}{q_1} \ell(\lambda) - q_2 &= \frac{1}{q_1}\left(\ell(\chi_1) + (s_2-s_1) + \ell(\chi_2)\right) - q_2 \\
        &\leq d(\eta_{xy}(t_1),\gamma(s_1)) + d(\gamma(s_1), \gamma(s_2)) + d(\gamma(s_2), \eta_{xy}(t_2))+3q_2 - q_2 \\
        &\leq 2(q_1^2q_2+q_1^2r+q_2+r) + 4r + 2D + 2q_2.
    \end{align*}
    Hence, we have that
    \begin{align*}
        \ell(\lambda) &\leq q_1\left(2(q_1^2q_2+q_1^2r+q_2+r) + 4r + 2D+3q_2\right) \\
        &\leq 8rq_1^3 + 7q_1^3q_2+2Dq_1.
    \end{align*}

    From the conclusion of Part 1, this implies that the 
    \[
        \left(D\log_2\left(8rq_1^3 + 7q_1^3q_2+2Dq_1\right) +\frac{1}{2}\left(q_1+q_2\right)D\right)\text{--neighbourhood}
    \]
    of \(\lambda\) contains \(\eta\left(\eta_{xy}(t_1),\eta_{xy}(t_2)\right)\). By (G2), \(\eta\left(\eta_{xy}(t_1),\eta_{xy}(t_2)\right)\) is at most Hausdorff distance of \(D\) from the subpath \(\eta_{xy}([t_1,t_2])\). Combining these two results yields that the 
    \[
    \left(D\log_2\left(8rq_1^3 + 7q_1^3q_2+2Dq_1\right)+\frac{1}{2}\left(q_1+q_2\right)D + D\right)\text{--neighbourhood}
    \]
    of \(\lambda\) contains the subpath \(\eta_{xy}([t_1,t_2])\). 
    
    \vspace{0.5cm}
    \emph{Part 4:} 
    We continue by making the following claim.
    \vspace{4pt}
    
    \noindent
    \emph{Claim 1.}
    The below equality is satisfied:
    \[
        d(\eta_{xy}(t), \lambda)) = r = d(\eta_{xy}(t), \gamma(s)),
    \]
    where the rightmost equality comes from our definition of the point \(\gamma(s) \in \lambda\).

    \begin{proof}[Proof of Claim 1.]
    Briefly, this claim holds because of our choice of the distance \(q_1^2q_2+q_1^2r+q_2+r\) in Part 2. We expand on this as follows. Assume \(t_1>0\), and consider \(\chi_1 = \eta(\eta_{xy}(t_1), \gamma(s_1))\), one of the three \((q,q)\)--quasi-geodesics which form \(\lambda\) defined earlier in this part. In order to show the claim, we want to show there is no point \(p_1 \in \chi_1\) such that \(d(\eta_{xy}(t), p_1) < d(\eta_{xy}(t), \gamma(s)) = r\), and we can form an equivalent statement for \(p_2 \in \chi_2\).

    As \(\chi_1: [a,b] \to X\) is a \((q_1,q_2)\)--quasi-geodesic,
    \[
        \frac{1}{q_1}|a-b| - q_2 \leq d(\eta_{xy}(t_1),\gamma(s_1)) \leq r.
    \]
    Rearranging gives
    \begin{align*}
        |a-b| &\leq q_1(q_2+r).
    \end{align*}

    Therefore for any point \(p_1 \in \chi_1\), we have
    \[
    d(\eta_{xy}(t_1),p_1) \leq q_1|a-b|+q_2 \leq q_1^2(q_2+r)+q_2,
    \]
    
    and by the triangle inequality, 
    \begin{align*}
    d(p_1, \eta_{xy}(t)) &\geq d(\eta_{xy}(t_1),\eta_{xy}(t)) - d(\eta_{xy}(t_1),p_1) \\
    &\geq (q_1^2q_2+q_1^2r+q_2+r) - (q_1^2q_2+q_1^2r+q_2) \\
    &=r.
    \end{align*}
    
    Hence, there is no \(p_1 \in \chi_1\) such that \(d(\eta_{xy}(t), p_1) < r\). The argument can be repeated for \(t_2>t\) with the points \(\eta_{xy}(t_2)\) and \(p_2\) instead of \(\eta_{xy}(t_1)\) and \(p_1\). Hence, the claim follows.
    \end{proof}

    We use the above claim to show that \(\eta_{xy}\) lies in a uniform neighbourhood of any \((q_1,q_2)\)--quasi-geodesic from \(x\) to \(y\), which we have denoted by  \(\gamma\). Since the closest point to \(\eta_{xy}(t)\) on \(\lambda\) is actually on the subpath \(\gamma([s_1,s_2])\) (the second subpath in the concatenation which defines \(\lambda\)), and \(\eta_{xy}(t) \in \eta_{xy}([t_1,t_2])\) is contained in the \((D\log_2\left(8rq_1^3 + 7q_1^3q_2 + 2Dq_1\right) +\frac{1}{2}(q_1+q_2)D + D)\)--neighbourhood of \(\lambda\), \(\eta_{xy}(t)\) is also in this neighbourhood of \(\lambda\). Hence, the constant \(r\) satisfies the inequality:
    \[
        r \leq D\log_2\left(8rq_1^3 + 7q_1^3q_2 + 2Dq_1\right) +\frac{1}{2}(q_1+q_2)D + D.
    \]

    We define the function \(f_{q_1,q_2}: \mathbb{R}_{\geq 0} \to \mathbb{R}_{\geq 0}\) by
    \[
    f_{q_1,q_2}(x) := D\log_2\left(8xq_1^3 + 7q_1^3q_2 + 2Dq_1\right) +\frac{1}{2}(q_1+q_2)D + D.
    \]
    This is the upper bound given on the right hand side above, where the constant \(r\) is instead replaced by a variable \(x\). Note that as \(x\) grows linearly, \(f\) grows logarithmically. Hence, the value \(f(x)\) is eventually overtaken by \(x\). In other words, there is some number \(z\) such that if \(x \geq z\), then \(x > f(x)\). Since \(r \leq f(r)\), \(r\) must be less than any choice of this constant \(z\). 
    
    In other words, the set \(Z = \{f(z) : \text{if } x \geq z \text{ then } f(x) < x\}\) is non-empty. Let \(\kappa = \inf(Z)\). Since \(f\) is increasing, we have \(r \leq f(r) \leq \kappa = \inf(Z)\). In Section 2.2 we shall obtain a sequence of estimates for \(\inf(Z)\), and we refer to this family of estimates as a sequence \(\kappa_n\).

    In short, the constant \(\kappa\) is a universal constant \(\kappa(q_1,q_2,D)\) which serves as an upper bound of \(r\). This therefore means that \(\eta_{xy}\) lies in a uniform neighbourhood of any \((q_1, q_2)\)--quasi-geodesic from \(x\) to \(y\), as desired.

\vspace{0.5cm}
\emph{Part 5:}
    Finally, it remains to show the converse of the previous result: every \((q_1,q_2)\)--quasi-geodesic \(\gamma : [0,n] \to X\) from \(x\) to \(y\) lies in a uniform neighbourhood of \(\eta_{xy}\). The set
    \[
        S = \{s \in [0,n]: \text{ there exists } t \text{ satisfying } d(\gamma(s), \eta_{xy}(t)) \leq \kappa\}
    \]
    is non-empty by the definition of the distance \(r\) and points \(s,t\) in Part 2. Given \(s \in [0,n]\), let \(t\) be maximal such that there exists \(s'' < s\) with \(d(\gamma(s''), \eta_{xy}(t)) \leq \kappa\). Consider \(\eta(\eta_{xy}(t),y)\). By coarse connectivity, there exists \(t' > t\) for which \(d(\eta_{xy}(t),\eta_{xy}(t')) \leq D\). Fix \(s' \in [0,n]\) such that \(d(\gamma(s'), \eta_{xy}(t')) \leq \kappa\). We may deduce that from the choice of \(t\), we have \(s'' \leq  s \leq s'\) and by the triangle inequality,
        \[
            d(\gamma(s''), \gamma(s')) \leq d(\gamma(s''), \eta_{xy}(t)) + d(\eta_{xy}(t),\eta_{xy}(t')) + d(\eta_{xy}(t'), \gamma(s')) \leq 2\kappa + D.
        \]
        As \(\gamma\) is a \((q_1,q_2)\)--quasi-geodesic, this implies that \(|s''-s'| \leq q_1(2\kappa + D + q_2)\) and therefore
        \begin{align*}
            d(\gamma(s), \gamma(S)) &\leq q_1\frac{|s''-s'|}{2} + q_2 \\
                                    &\leq \frac{q_1^2}{2}(2\kappa + D + q_2) + q_2.
        \end{align*}
        
        Hence, \(\gamma(S)\) is \(\left(\frac{q_1^2}{2}(2\kappa + D + q_2) + q_2\right))\)-dense in \(\mathrm{Im}(\gamma)\). Therefore \(\gamma\) is contained in the \(\left(\frac{q_1^2}{2}(2\kappa + D + q_2) + q_2 + \kappa)\right)\)-neighbourhood of \(\eta_{xy}\), and so \(g_{q_1,q_2,D}(\kappa) = \frac{q_1^2}{2}(2\kappa + D + q_2) + q_2 + \kappa\).

    We recall from the start of the proof that the quasi-hyperbolicity constant \(\delta'\) satisfies \(\delta' = 2g_{q_1,q_2,D} + D\). Using the above expression for \(g_{q_1,q_2,D}\),
    \begin{align*}
        \delta'(q_1,q_2,D) = 2g_{q_1,q_2,D} + D &= 2\left(\frac{q_1^2}{2}\left(2\kappa + D + q_2\right) + q_2 + \kappa\right) + D.
    \end{align*}
\end{proof}


\subsection{Estimation of \(\kappa\)}
\label{section:kappa}

Here, we provide an estimate for the universal constant \(\kappa\) defined in the proof of Proposition~\ref{prop:guessinggeodesics} in the previous subsection. The following lemma gives a sequence \(\kappa_n\) of estimates for \(\inf(Z)\).

\begin{lem}
    \label{lem:kappa1}
    Let \(q := \max\{q_1,q_2\}\) and recall that:
\begin{equation*}
    \label{eq:f}
    f(x) = D\log_2\left(8xq_1^3 + 7q_1^3q_2 + 2Dq_1\right) +\frac{1}{2}\left(q_1+q_2\right)D + D
\end{equation*}

and \(Z = \{f(z) : \text{if } x \geq z \text{ then } f(x) < x\} \). 

If \(q, D \geq 1\), then there exists a sequence \(\{\kappa_n\}_{n \in \mathbb{N}} \in Z\) of the following form: let \(\kappa_n := K_nqD^{1+\eps_n}\), where 
    \[
    K_n = \log_2(n+8)+9+\lceil 1+\log_2(\log_2(n+8)+9) \rceil
    \]
    and \(\eps_n = \frac{1}{\log_2(n+8)} \leq \frac{1}{3}\).
\end{lem}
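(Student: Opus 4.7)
The plan is to show $f(\kappa_n) \leq \kappa_n$ together with the monotonicity of $g(x) := x - f(x)$ on $[\kappa_n, \infty)$; then setting $z := f^{-1}(\kappa_n) \geq \kappa_n$ places $\kappa_n = f(z) \in Z$, giving $\inf(Z) \leq \kappa_n$. The monotonicity condition is immediate: one has $f'(x) \leq \tfrac{8Dq_1^3}{8xq_1^3 \ln 2} = \tfrac{D}{x \ln 2}$, and the crude bound $\kappa_n \geq 17D$ (from $K_n \geq 17$ together with $q, D \geq 1$) forces $f'(x) \leq \tfrac{1}{17 \ln 2} < 1$ throughout $[\kappa_n, \infty)$, so $g$ is strictly increasing there.

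The main work is the inequality $f(\kappa_n) \leq \kappa_n$. Setting $q := \max\{q_1, q_2\}$ and using $q, D \geq 1$, the argument of the logarithm in $f(\kappa_n)$ satisfies $8\kappa_n q_1^3 + 7q_1^3 q_2 + 2Dq_1 \leq (8K_n + 9)q^4 D^{1+\varepsilon_n}$, so after dividing by $D$ the target inequality reduces to
\[
\log_2(8K_n + 9) + 4\log_2 q + (1 + \varepsilon_n)\log_2 D + q + 1 \;\leq\; K_n q D^{\varepsilon_n}.
\]
With $A_n := \log_2(n+8) + 9$, substituting $y := \varepsilon_n \log_2 D \geq 0$ (so $D^{\varepsilon_n} = 2^y$) and using the identity $(1 + \varepsilon_n)/\varepsilon_n = A_n - 8$ rewrites this as
\[
\log_2(8K_n + 9) + 4\log_2 q + (A_n - 8)y + q + 1 \;\leq\; K_n q \cdot 2^y \qquad (y \geq 0,\ q \geq 1).
\]
A short derivative comparison in $q$ (the right side grows at rate $K_n 2^y \geq 17$, while the left grows at rate at most $4/\ln 2 + 1 < 6.8$) shows that it suffices to verify the inequality at $q = 1$.

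At $q = 1$, the task is to show $h(y) := K_n \cdot 2^y - (A_n - 8) y \geq \log_2(8K_n + 9) + 2$ for all $y \geq 0$. By elementary calculus, either $K_n \ln 2 \geq A_n - 8$ and $h$ is increasing on $[0, \infty)$ with minimum $h(0) = K_n$, or else $h$ attains its minimum at the interior critical point $y^\ast = \log_2\!\bigl(\tfrac{A_n - 8}{K_n \ln 2}\bigr)$, where the minimum value $\tfrac{A_n - 8}{\ln 2}\bigl(1 - \ln \tfrac{A_n - 8}{K_n \ln 2}\bigr)$ can be estimated directly using $K_n \sim A_n$. In either regime the minimum of $h$ grows at least linearly in $A_n$, while the right-hand side is only $\log_2(8K_n + 9) + 2 \leq \log_2 K_n + 6$. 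The precise additive correction $\lceil 1 + \log_2 A_n \rceil$ inside $K_n$ is tuned exactly to dominate the $\log_2 K_n$ contribution uniformly. I expect the main obstacle to be verifying this two-case estimate cleanly at the boundary regime $n = 0$ (where $A_n = 12$, $K_n = 17$, and the inequality is tightest), since the asymptotic calculations do not apply there directly; the ceiling in the definition of $K_n$ is what provides just enough slack to close the estimate uniformly in $n$.
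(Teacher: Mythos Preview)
Your outline is sound and would close with a little more work, but as written it is incomplete: you explicitly defer the verification of the two-case estimate for $h(y)$ and the boundary check at small $n$. If you carry it through, both cases do go through (in the interior-minimum case one has $r:=(A_n-8)/(K_n\ln 2)\in(1,1/\ln 2)$, so $h(y^\ast)\geq 0.63\,(A_n-8)/\ln 2$, which comfortably beats $\log_2(8K_n+9)+2$ once $A_n\gtrsim 27$, and for smaller $A_n$ you are in the increasing case where $h(0)=K_n\geq 17$ already suffices). So there is no genuine obstruction, only missing bookkeeping.

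The paper's argument avoids this case split entirely by handling $q$ and $D$ together rather than separately. The trick is to set
\[
T_n \;=\; q^{\,4\varepsilon_n/(1+\varepsilon_n)}D^{\varepsilon_n}\;\geq\;1,
\]
so that $\log_2(q^4D^{1+\varepsilon_n})=\tfrac{1+\varepsilon_n}{\varepsilon_n}\log_2 T_n\leq \tfrac{1+\varepsilon_n}{\varepsilon_n}T_n$ by the crude bound $\log_2 T\leq T$. What remains is the single algebraic inequality
\[
\log_2 K_n \;\leq\; K_n - 8 - \tfrac{1+\varepsilon_n}{\varepsilon_n},
\]
and this is exactly what the ceiling term is designed to give: since $\lceil 1+\log_2 A_n\rceil\leq A_n$ one has $K_n\leq 2A_n$, hence $\log_2 K_n\leq 1+\log_2 A_n\leq \lceil 1+\log_2 A_n\rceil = K_n-A_n = K_n-8-\tfrac{1+\varepsilon_n}{\varepsilon_n}$. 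This is the formalisation of your remark that the additive correction in $K_n$ is ``tuned exactly to dominate the $\log_2 K_n$ contribution''; once you write it down this way, the entire estimate becomes a three-line chain with no case analysis and no boundary checks. Your derivative-in-$q$ reduction and the calculus on $h(y)$ recover the same conclusion, but at the cost of extra verification that the paper's substitution sidesteps.
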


\begin{proof}
Given \(n \in \mathbb{N}\), let \(\kappa_n = K_nqD^{1+\eps_n}\) where \(K_n\) and \(\eps_n\) are as given in the statement above. From this, we have:
\begin{align*}
    f(x) &\leq D\log_2\left(8q^3x + 7q^4 + 2qD\right) +qD + D
\end{align*}
Furthermore, since \(f\) is monotonically increasing, then the following holds whilst \(x \leq \kappa_n\):
\begin{align*}
f(x) &\leq D\log_2\left(17Kq^4D^{1+\eps_n}\right) + qD + D \\
     &\leq D\log_2(8Kq^4D^{1+\eps_n} + 7q^4 + 2qD) + qD + D.
\end{align*}

Next, observe that 
\begin{align*}
    \log_2(K_n) &= \log_2(\log_2(n+8)+9+ \left\lceil 1 + \log_2(\log_2(n+8)+9) \right\rceil \\
              &\leq \log_2(2(\log_2(n+8)+9)) \\
              &\leq \lceil \log_2(2(\log_2(n+8)+9)) \rceil \\
              &= \log_2(n+8) + 9 + \lceil 1 + \log_2(\log_2(n+8)+9) \rceil - 8 - (\log_2(n+8)+1) \\
              &= K_n-8-\frac{1+\eps_n}{\eps_n}. 
\end{align*}

Therefore:
\begin{equation*}
    \label{eq:Kn}
    \log_2(K_n) \leq K_n-8-\frac{1+\eps_n}{\eps_n}. \tag{\(\ast\)}
\end{equation*}

Set \(T_n = q^{\frac{4\eps_n}{1+\eps_n}}D^{\eps_n} \geq 1\). We can derive two inequalities \eqref{eq:logsandk} from the previous one \eqref{eq:Kn}. To achieve the first inequality, we multiply the right hand side of the inequality above by \(T_n\) and the second follows from the condition \(\eps_n \leq \frac{1}{3}\).
\begin{equation*} \label{eq:logsandk}
    \log_2(K_n) + \frac{1+\eps_n}{\eps_n}T_n \leq (K_n-8)T_n \leq (K_n-8)qD^{\eps_n}. \tag{\(\ast \ast\)}
\end{equation*}

Using this, we have
\begin{align*}
    f(\kappa_n) &\leq D\log_2(17) + D\log_2(K_nq^4D^{1+\eps_n}) + qD + D \\
    &\leq D\log_2(K_nq^4D^{1+\eps_n}) + 7qD \\
    &\leq D\left(\log_2(K_n) + \frac{1+\eps_n}{\eps_n}\log_2\left(q^{\frac{4\eps_n}{1+\eps_n}}D^{\eps_n}\right)\right) + 7qD \\
    &\leq D\left(\log_2(K_n) + \frac{1+\eps_n}{\eps_n}q^{\frac{4\eps_n}{1+\eps_n}}D^{\eps_n}\right) + 7qD \\
    &\leq (K_n-8)qD^{1+\eps_n} + 7qD & \qquad(\text{By } \eqref{eq:logsandk})\\
    &= K_nqD^{1+\eps_n} - 8qD^{1+\eps_n} + 7qD \\
    &\leq K_nqD^{1+\eps_n} = \kappa_n.
\end{align*}

We may conclude that for each universal constant \(\kappa_n\), if \(x\leq \kappa_n\) then \(f(x) \leq x\). We claim that in order for \(x\) to satisfy the reverse inequality \(x \leq f(x)\), we must have that \(x \leq \kappa_n\). To confirm this, we differentiate as below. As \(f\) is monotonically increasing,

\[
    f'(x) \leq \frac{d}{dx}\left(D\log_2(8q^3x+9q^4D)\right) 
    = \frac{8q^3D}{\ln(2) \cdot (8q^3x+9q^4D)}.
\]

If \(x \geq \kappa_n\), then
\[
    f'(x) \leq \frac{8q^3D}{\ln(2) \cdot (8K_nq^4D^{1+\eps_n}+9q^4D)} 
    \leq \frac{8q^3D}{\ln(2) \cdot (17K_nq^4D^{1+\eps_n})}
    <1.
\]
Therefore, for all \(x \geq \kappa_n\), we have \(f(x) < f(\kappa_n) + (x-\kappa_n)\). Since \(f(\kappa_n) \leq \kappa_n\), we have that if \(x \geq \kappa_n\), then \(f(x) < x\). In other words, if \(x \leq f(x)\), then \(x < \kappa_n\), as required.
\end{proof}

\begin{rem}
    \begin{enumerate}
        \item The choice of base for the logarithms here are arbitrary - we could equally take the natural logarithm or logarithm with base 10. 
        \item It is likely that there is scope to improve the approximation steps above through different methods. In addition, taking \(q = \max\{q_1,q_2\}\) gives us an estimate which can be applied to a general \((q_1,q_2)\)--quasi-geodesic metric space. However, if we were to restrict to a \((1,q)\)--quasi-geodesic metric space at this stage we could improve our estimate.
    \end{enumerate}
\end{rem}

\subsection{Determining a Hyperbolicity Constant}
\label{section:quasi}

In this section, we discuss how we can relate a quasi-hyperbolicity constant, like the one determined in the previous section, to a hyperbolicity constant.

It is a fact that not all quasi-hyperbolic spaces are four-point hyperbolic: one example is the graph of the function \(y=\lvert x \rvert\) equipped with the subspace metric inherited from \(\mathbb{R}^2\) \cite{V}. But we can talk about a hyperbolicity constant for certain quasi-hyperbolic spaces. Specifically, it is known that roughly geodesic quasi-hyperbolic spaces are hyperbolic (see \cite[Proposition A.2]{PSZ}). However in the existing literature we were unable to find an explicit relationship between the quasi-hyperbolicity constant for roughly geodesic quasi-hyperbolic spaces and the hyperbolicity constant for hyperbolic spaces. Therefore, our objective in this section is to derive an explicit relationship between these two constants. 

To make calculations easier, we eliminate one of our variables by taking our metric space to be \(\bm{q}\)\emph{--roughly geodesic}, in other words, a \((1,q)\)--quasi-geodesic metric space. Hence from here on, we focus on \(q\)--rough geodesic triangles.

In order to derive an explicit hyperbolicity constant, we use hyperbolicity in the sense of Gromov. 

\begin{defn}
    Let \(X\) be a metric space and \(\delta \in \mathbb{R}_{\geq 0}\). We say that \(X\) satisfies the \emph{four-point condition with constant} \(\bm{\delta}\) if 
        \[
        d(x,z) + d(y,w) \leq \max\{d(x,w)+ d(y,z), d(x,y) + d(z,w)\} + \delta
        \]
    for all \(x,y,z,w \in X\). If in addition \(X\) is roughly geodesic, then we say \(X\) is \(\delta\)--hyperbolic (in the sense of Gromov).
\end{defn}

Since we use the four-point condition definition of hyperbolicity, it is useful for us to have a notion of quasi-hyperbolicity in terms of quadrilaterals. Here we define a \(q\)--rough geodesic quadrilateral \(Q\) as a tuple \((\lambda_1,\lambda_2,\lambda_3,\lambda_4)\), where \(\lambda_i\) denotes a \(q\)--rough geodesic with endpoints \(x_i\) and \(x_{i+1}\) for \(i = 1,2,3\), and \(\lambda_4\) is a \(q\)--rough geodesic with endpoints \(x_4\) and \(x_1\).

\begin{defn}[Thin quadrilaterals]
    Let \(X\) be a \(q\)--roughly geodesic space and \(\lambda_1, \lambda_2, \lambda_3, \lambda_4\) be the sides of a quadrilateral \(Q \subset X\). Let \(\eps_T \geq 0\). We say that \(Q\) is \(\bm{\eps_T}\)\emph{--thin} if \(\lambda_i \subset \mathcal{N}_{\eps_T}(\lambda_{j} \cup \lambda_{k} \cup \lambda_{l})\), for all permutations \((i,j,k,l)\) of the indices \(1, \dots, 4\). 
\end{defn}

\begin{lem}
    \label{lem:thinquads}
    If all \(q\)--rough geodesic triangles are \(\delta'\)-thin, then all \(q\)--rough geodesic quadrilaterals are \(2\delta'\)-thin.
\end{lem}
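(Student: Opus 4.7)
The plan is to prove this by the standard ``add a diagonal'' trick, reducing the quadrilateral to two triangles and chaining the thinness estimates.

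Concretely, let $Q$ be a $q$--rough geodesic quadrilateral with vertices $x_1, x_2, x_3, x_4$ and sides $\lambda_1, \lambda_2, \lambda_3, \lambda_4$, labelled cyclically so that $\lambda_i$ joins $x_i$ to $x_{i+1}$ (indices mod $4$). Since $X$ is $q$--roughly geodesic, I may choose a $q$--rough geodesic diagonal $\mu$ from $x_1$ to $x_3$. This $\mu$, together with $\lambda_1, \lambda_2$, forms a $q$--rough geodesic triangle $T_1$, and together with $\lambda_3, \lambda_4$ forms a $q$--rough geodesic triangle $T_2$. By the hypothesis, both $T_1$ and $T_2$ are $\delta'$--thin.

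Now fix a point $p \in \lambda_1$; I will show that $p$ lies in the $2\delta'$--neighbourhood of $\lambda_2 \cup \lambda_3 \cup \lambda_4$. Applying the $\delta'$--thinness of $T_1$ at $p$, there are two cases: either $p \in \mathcal{N}_{\delta'}(\lambda_2)$, in which case we are already done, or there exists a point $p' \in \mu$ with $d(p, p') \leq \delta'$. In the latter case, applying the $\delta'$--thinness of $T_2$ to the point $p' \in \mu$ gives some $p'' \in \lambda_3 \cup \lambda_4$ with $d(p', p'') \leq \delta'$, and the triangle inequality yields $d(p, p'') \leq 2\delta'$. Either way $p$ is within $2\delta'$ of $\lambda_2 \cup \lambda_3 \cup \lambda_4$.

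By symmetry of the construction (swapping the roles of $\lambda_1$ and $\lambda_3$ in the triangle $T_1$, and likewise for $\lambda_2$ with $\lambda_4$ using $T_2$), the same argument shows $\lambda_j \subset \mathcal{N}_{2\delta'}(\lambda_i \cup \lambda_k \cup \lambda_l)$ for every permutation $(i,j,k,l)$ of $(1,2,3,4)$, giving $2\delta'$--thinness of $Q$.

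I do not expect a serious obstacle here: the only mild subtlety is to make sure the diagonal $\mu$ can be chosen as a $q$--rough geodesic (guaranteed by $X$ being $q$--roughly geodesic) and that the resulting triangles genuinely qualify as $q$--rough geodesic triangles in the sense used in the hypothesis, so that the assumed $\delta'$--thinness can legitimately be applied. Once that is noted, the estimate is just a two-step application of the triangle inequality.
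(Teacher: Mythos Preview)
Your proof is correct and follows essentially the same ``add a diagonal'' approach as the paper: both introduce a $q$--rough geodesic diagonal, split the quadrilateral into two $\delta'$--thin triangles, and chain the two $\delta'$--neighbourhood inclusions via the triangle inequality. The only cosmetic difference is that the paper argues via set containments (showing $\lambda_4 \subset \mathcal{N}_{\delta'}(\lambda_3 \cup \gamma) \subset \mathcal{N}_{2\delta'}(\lambda_1 \cup \lambda_2 \cup \lambda_3)$) rather than pointwise as you do.
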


\begin{proof}
    Let \(\lambda_1, \lambda_2, \lambda_3\) be three sides of a \(q\)--rough geodesic quadrilateral forming a path, where the sides are in respective order. Let \(\lambda_4\) be a \(q\)--rough geodesic such that \(\{\lambda_1, \dots, \lambda_4\}\) is the set of sides of a \(q\)--rough geodesic quadrilateral \(Q\). Let \(\gamma\) be a \(q\)--rough geodesic with endpoints \(\lambda_1(0), \lambda_3(0)\). Then,
    \(
        \gamma \subset \mathcal{N}_{\delta'}(\lambda_1 \cup \lambda_2) \text{ as these sides form a } q \text{-rough geodesic triangle.}
    \)
    
    From this, 
    \begin{align*}
        \lambda_4   &\subset \mathcal{N}_{\delta'}(\lambda_3 \cup \gamma) \\
                &\subset \mathcal{N}_{\delta'}\left(\lambda_3 \cup \mathcal{N}_{\delta'}(\lambda_1 \cup \lambda_2)\right) \\
                &\subset \mathcal{N}_{2\delta'}(\lambda_1 \cup \lambda_2 \cup \lambda_3).
    \end{align*}
    Therefore, an arbitrary \(q\)-rough quadrilateral is \(2\delta'\)-thin as required.
\end{proof}

Next we introduce a notion of bottlenecked rough geodesics and what it means for a pair of rough geodesics to fellow-travel. This will be a useful way for us to describe rough geodesics that stay suitably close over a long time. 

\begin{defn}
     Let \(\lambda_1, \lambda_2: [0,\ell(\lambda_i)] \to X\) be \(q\)--rough geodesics for \(i=1,2\), and let \(\lambda_1', \lambda_2'\) be subpaths of \(\lambda_1, \lambda_2\) respectively. The subpaths \(\lambda_1', \lambda_2'\) are \(b\)-bottlenecked if the Hausdorff distance between \(\lambda_1'\) and \(\lambda_2'\) is at most \(b\). The \(b\)-bottleneck length is the maximum of two distances: the distance between the endpoints of \(\lambda_1'\) and the distance between the endpoints of \(\lambda_2'\).
\end{defn}

\begin{defn}
\label{defn:fellowtravelling}
     We say that \(\lambda_1\) and \(\lambda_2\) \(\bm{\nu}\)\textbf{--fellow-travel} if there exists a pair of subpaths which are \(\nu\)--bottlenecked. The \(\bm{\nu}\)\textbf{--fellow-travelling distance} of \(\lambda_1\) and \(\lambda_2\), denoted \(\mathcal{D}_\nu(\lambda_1, \lambda_2)\), is the maximal \(\nu\)-bottleneck length over all possible pairs of \(\nu\)-bottlenecked subpaths. We denote the pair of subpaths which give this maximum distance by \(\lambda_1'\) and \(\lambda_2'\), and their endpoints by \(\lambda_1(a_1), \lambda_1(b_1)\) and \(\lambda_2(a_2), \lambda_2(b_2)\) for \(\lambda_1'\) and \(\lambda_2'\) respectively.
\end{defn}

\begin{lem}
    \label{lem:quasihyp}
    If \(X\) is a \(q\)--rough geodesic space that is \(\delta'\)--quasi-hyperbolic, then \(X\) satisfies the four-point condition with hyperbolicity constant \(\delta = \delta(q,\delta') = 56\delta' + 6q\).
\end{lem}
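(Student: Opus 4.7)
The aim is to extract the four-point inequality with constant $56\delta' + 6q$ from the quasi-hyperbolicity hypothesis, which controls $q$--rough geodesic triangles. My plan is to form a $q$--rough geodesic quadrilateral on the four given points, apply the thin-quadrilateral estimate of Lemma~\ref{lem:thinquads}, and extract the four-point inequality by means of a fellow-travelling argument using the notation introduced in Definition~\ref{defn:fellowtravelling}.

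Given $x,y,z,w \in X$, fix $q$--rough geodesic sides $\lambda_1=[x,y]$, $\lambda_2=[y,z]$, $\lambda_3=[z,w]$, $\lambda_4=[w,x]$, together with $q$--rough geodesic diagonals $\mu_1=[x,z]$ and $\mu_2=[y,w]$. By Lemma~\ref{lem:thinquads}, the quadrilateral $Q=(\lambda_1,\lambda_2,\lambda_3,\lambda_4)$ is $2\delta'$--thin, and each of the triangles $(\lambda_1,\lambda_2,\mu_1)$, $(\lambda_3,\lambda_4,\mu_1)$, $(\lambda_2,\lambda_3,\mu_2)$, $(\lambda_1,\lambda_4,\mu_2)$ is $\delta'$--thin. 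Without loss of generality, assume the diagonal pairing $d(x,z)+d(y,w)$ realises the maximum of the three sums appearing in the four-point inequality, so that the goal reduces to bounding this sum in terms of the larger of $d(x,y)+d(z,w)$ or $d(y,z)+d(w,x)$.

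The core of the argument is a fellow-travelling analysis. Starting from the endpoint $x$ and moving along $\mu_1$, every point $\mu_1(t)$ lies within $2\delta'$ of $\lambda_1 \cup \lambda_2 \cup \lambda_3 \cup \lambda_4$ by thinness of $Q$. Using coarse continuity of the $q$--rough geodesic parametrisation (so that consecutive points move a controlled distance), identify the transition times at which the ``closest side'' switches. These transition points yield bottleneck pairs whose existence forces either (a) $\mu_1$ and $\mu_2$ themselves $\nu$--fellow-travel along a long subpath for some $\nu = O(\delta')$, or (b) one of the pairs of opposite sides $(\lambda_1,\lambda_3)$ or $(\lambda_2,\lambda_4)$ fellow-travels. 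In each case, the fellow-travelling subpath produces a ``shared length" $L$ along which the triangle inequality is essentially saturated, which lets me translate the fellow-travelling distance $\mathcal{D}_\nu$ into a comparison between $d(x,z)+d(y,w)$ and either $d(x,y)+d(z,w)$ or $d(y,z)+d(w,x)$. Concretely, if opposite sides $\lambda_1$ and $\lambda_3$ are $\nu$--bottlenecked on long subpaths, then the triangle inequality along the bottleneck together with the thin-quadrilateral bound produces
\[
d(x,z) + d(y,w) \le d(x,y) + d(z,w) + C(\delta',q),
\]
and symmetrically for the other pairing; the case where the two diagonals fellow-travel is handled similarly by introducing a cross-cut near the bottleneck.

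The main obstacle is the bookkeeping: each application of a thin-triangle bound contributes $\delta'$, each parametrisation comparison or subpath endpoint match contributes $q$, and the constant $56\delta'+6q$ suggests roughly seven applications of the $2\delta'$ slim-quadrilateral estimate compounded with a half-dozen invocations of the rough-geodesic slack. The subtlety is that in the rough setting the ``closest point'' is not unique and the transition between closest sides is only coarsely monotone; thus one must carefully show that the fellow-travelling subpaths are aligned (i.e., the four endpoints of the two bottlenecked subpaths appear in the correct cyclic order along their respective sides) before the triangle-inequality comparison can be carried out. I expect that bounding the distance from the bottleneck points to the vertices $x,y,z,w$ and ensuring that the sum of lengths along the bottleneck exceeds the ``error" from roughness is the most delicate accounting step.
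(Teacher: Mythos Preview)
Your outline diverges substantially from the paper's argument, and as stated it contains a real gap. The paper never introduces diagonals. It orders the three pairings as $S\le M\le L$, builds only the quadrilateral on $\lambda_1,\dots,\lambda_4$, and proves two claims. Claim~2(a): with $\nu=2\delta'$ the opposite sides $\lambda_2,\lambda_4$ $\nu$--fellow-travel, and the fellow-travelling distance is at most $5\nu$, because a longer bottleneck would force $M<S$ by a direct triangle-inequality estimate, contradicting the ordering. Claim~2(b): this short bottleneck pinches the quadrilateral, giving $d(\lambda_1,\lambda_3)\le 14\delta'+3\nu+4q$. Routing both diagonals through a pair $p\in\lambda_1$, $p'\in\lambda_3$ realising this distance then yields $L\le \ell(\lambda_1)+\ell(\lambda_3)+2d(p,p')+4q$ while $M\ge \ell(\lambda_1)+\ell(\lambda_3)-2q$, hence $L-M\le 56\delta'+6q$. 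Note the logic is opposite to yours: the paper exploits that the bottleneck is \emph{short}, not that some pair fellow-travels for a long time.

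The specific gap in your plan is case~(a). If the diagonals $\mu_1,\mu_2$ come close at $p\in\mu_1$, $p'\in\mu_2$, the natural triangle-inequality estimate gives $d(x,y)+d(z,w)\le d(x,z)+d(y,w)+2d(p,p')$, i.e.\ $M\le L+C$, which is the wrong direction; and in a tree the diagonals always meet, so this case cannot be dismissed as degenerate. What actually works with diagonals is the transition analysis you allude to but do not carry out: from the two $\delta'$--thin triangles on $\mu_1$ one has $\mu_1\subset\mathcal N_{\delta'}(\lambda_1\cup\lambda_2)$ and $\mu_1\subset\mathcal N_{\delta'}(\lambda_3\cup\lambda_4)$, and comparing the two transition times along $\mu_1$ shows that some point of $\mu_1$ is simultaneously $\delta'$--close to $\lambda_2$ and $\lambda_4$, or simultaneously $\delta'$--close to $\lambda_1$ and $\lambda_3$. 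Either way a pair of \emph{opposite sides} comes within $2\delta'$, and you land in your case~(b) --- there is no genuine case~(a). Once you see this, the remaining estimate is exactly the paper's final computation; without it, your sketch does not yet produce a bound on $L-M$, let alone the constant $56\delta'+6q$.
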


\begin{proof}
    Let \(x_1,x_2,x_3,x_4 \in X\). After relabelling the points, we may write
    \begin{align*}
        S &:= d(x_1, x_4) + d(x_2,x_3) \\
        M &:= d(x_1,x_2) + d(x_3,x_4) \\
        L &:= d(x_1, x_3) + d(x_2,x_4).
    \end{align*}
    with \(S \le M \le L\) similar to \cite[Part~III.H.1]{BH}. Consider a \(q\)--rough geodesic quadrilateral with sides \(\lambda_i\) from \(x_i\) to \(x_{i+1}\) (\(i=1,2,3\)) and \(\lambda_4\) from \(x_4\) to \(x_1\).

    First, we consider a quadrilateral \(Q\) with sides labelled as above such that \(\lambda_2\) and \(\lambda_4\) \(\nu\)--fellow-travel. By the geometry of our method, making a choice of the Hausdorff distance \(\nu\) gives an expression for \(\delta\) depending on this choice of \(\nu\). We then choose \(\nu = 2\delta'\), since for any choice of four vertices in \(X\), there always exists a quadrilateral \(Q\) such that \(\lambda_2\) and \(\lambda_4\) \(2\delta'\)--fellow-travel.

    For the rest of this proof, we keep the notation \(\nu\) for clarity. The main result follows by two claims, Claim 2(a) and Claim 2(b). We state and prove each in turn.
    \vspace{4pt}
    
    \noindent
    \emph{Claim 2(a).}
        If the sides \(\lambda_2\) and \(\lambda_4\) \(\nu\)--fellow travel, then \(D_{\nu}(\lambda_2,\lambda_4) \leq 5\nu\).

    \begin{proof}[Proof of Claim 2(a).]
        For a contradiction, suppose that \(\lambda_2\) and \(\lambda_4\) \(\nu\)-fellow travel with fellow-travelling distance \(D_{\nu}(\lambda_2,\lambda_4) > 5\nu\).

        Let 
        \begin{align*}
            z_2^- &:= \lambda_2(a_1) & z_2^+ &:= \lambda_2(b_1) \\
            z_4^- &:= \lambda_4(a_2) & z_4^+ &:= \lambda_4(b_2)
        \end{align*}
        where \(a_1, b_1, a_2\) and \(b_2\) are taken from the definition of fellow-travelling (Definition~\ref{defn:fellowtravelling}). Also, let \(\lambda_2^- := \lambda_2|_{[0, a_1]}\) and \(\lambda_2^+ := \lambda_2|_{[b_1, \ell(\lambda_2)]}\). Similarly define \(\lambda_4^-\) and \(\lambda_4^+\). Figure~\ref{fig:figure3} shows this setup.

        \begin{figure}[H]
        \centering
        \includegraphics[width=0.75\textwidth]{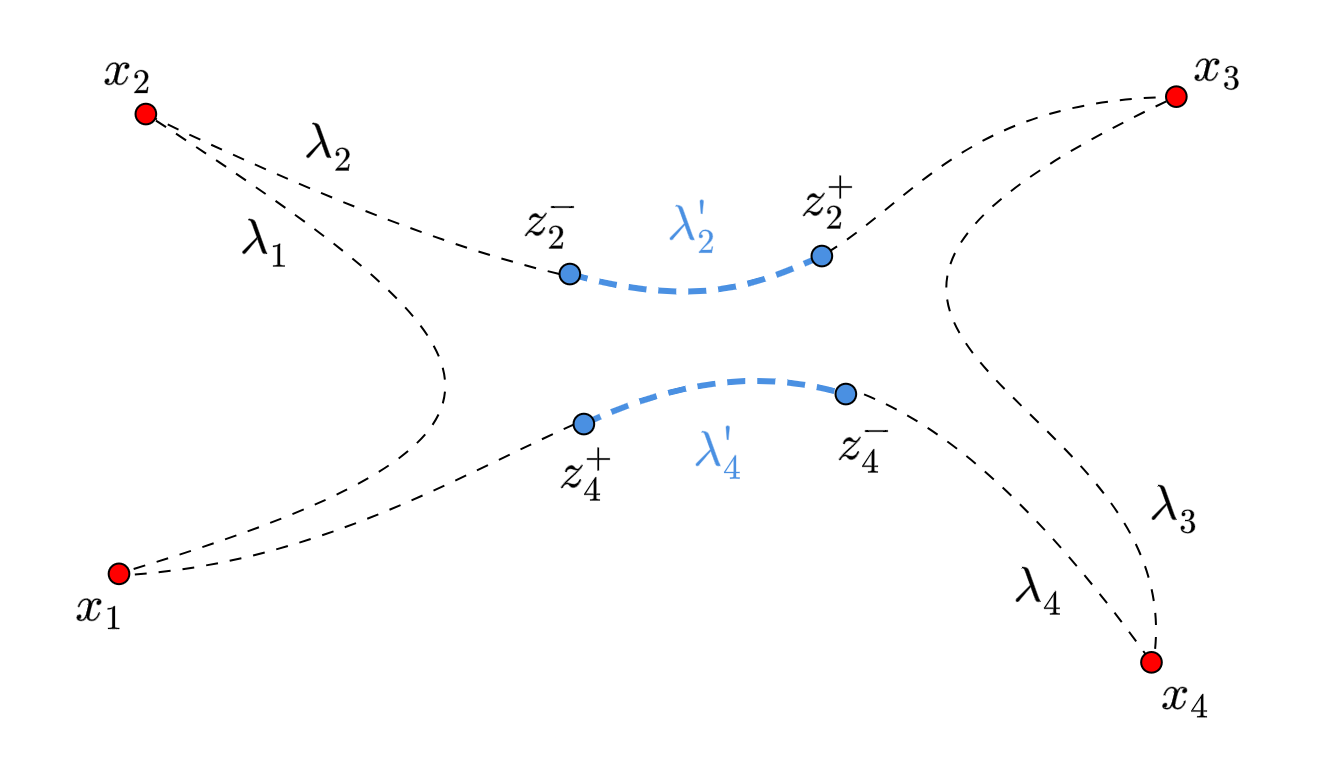}
        \caption{Dotted curves are used to denote quasi-geodesics. The blue curves denote the ``fellow-travelling section'', with the endpoints labelled accordingly.}
        \label{fig:figure3}
        \end{figure}
    
        Then,
        \begin{align*}
            M &= d(x_1,x_2) + d(x_3,x_4) \\
              &\leq d(x_1,z_4^+) + d(z_4^+,z_2^-) + d(z_2^-,x_2) + d(x_3,z_2^+) + d(z_2^+,z_4^-) + d(z_4^-,x_4) \\
              &\leq \ell(\lambda_2^-) + \ell(\lambda_2^+) + \ell(\lambda_4^-) + \ell(\lambda_4^+) + 2\nu + 4q.
        \end{align*}

        Also,
        \begin{align*}
            S &= d(x_1,x_4) + d(x_2,x_3) \\
              &\geq \ell(\lambda_2)-q + \ell(\lambda_4) - q \\
              &= \ell(\lambda_2^-) + \ell(\lambda_2^+) + \ell(\lambda_4^-) + \ell(\lambda_4^+) + \ell(\lambda_2') + \ell(\lambda_4') - 2q \\
              &\geq \ell(\lambda_2^-) + \ell(\lambda_2^+) + \ell(\lambda_4^-) + \ell(\lambda_4^+) + 2(D_{\nu}(\lambda_2,\lambda_4)-q) - 2q \\
              &> \ell(\lambda_2^-) + \ell(\lambda_2^+) + \ell(\lambda_4^-) + \ell(\lambda_4^+) + 2\nu + 4q.
        \end{align*}

        Therefore, by the above, 
        \[
        M \leq \ell(\lambda_2^-) + \ell(\lambda_2^+) + \ell(\lambda_4^-) + \ell(\lambda_4^+) + 2\nu + 4q < S
        \]
        and therefore, \(M < S\). But this contradicts our initial ordering of the lengths \(S,M\) and \(L\). Hence, we reach the desired contradiction and Claim 2(a) is proved.
    \end{proof}
    \vspace{4pt}
    
    \noindent
    \emph{Claim 2(b).} If the sides \(\lambda_2\) and \(\lambda_4\) \(\nu\)--fellow-travel, then \(d(\lambda_1,\lambda_3) < 14\delta' + 3\nu + 4q.\)

    \begin{proof}[Proof of Claim 2(b).]
    Let \(z_2^-\) and \(z_4^+\) be as defined in the proof of Claim 2(a). By rough geodesics and Claim 2(a), \(d(z_2^-, z_2^+) \leq 5\nu + \delta'\). To upper bound the distance between the two opposite sides \(\lambda_1\) and \(\lambda_3\), we break down our starting quadrilateral into three parts: two thin quadrilaterals and a ``fellow-travelling section'' between them. 
    
    \begin{figure}[H]
        \centering
        \includegraphics[width=0.75\textwidth]{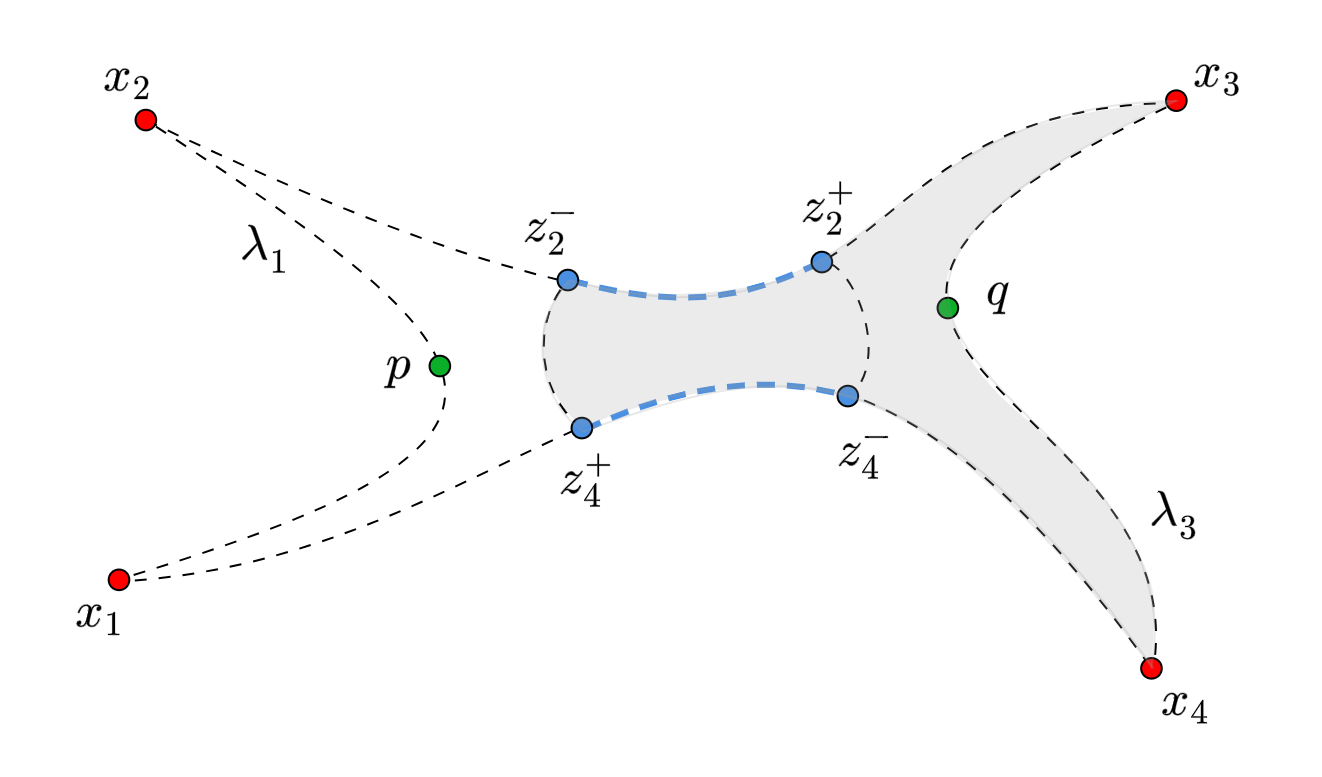}
        \caption{Quadrilateral separated into two thin quadrilaterals and a ``fellow-travelling section'' between them, where the latter two sections are greyed out.}
        \label{fig:figure4}
    \end{figure}
    
    We already have a bound for the length of the fellow-travelling section, so we concentrate on the thin quadrilateral made from the vertices \(x_1\), \(x_2\), \(z_2^-\) and \(z_4^+\) as highlighted in Figure~\ref{fig:figure4}.

    Suppose there exists a point \(y\) on \(\lambda_2\) at a distance of at least \(5\delta'+\nu\) from \(z_2^-\). Since \(d(z_2^-, z_4^+) \leq \nu\), the \(2\delta'\)-neighbourhood of the side of the quadrilateral with endpoints \(z_2^-\) and \(z_4^+\) has diameter at most \(4\delta'+\nu\). Hence, \(y\) cannot be \(2\delta'\)-close to this side. By the definition of fellow-travelling, the point \(y\) also cannot be \(2\delta'\)-close to the side on \(\lambda_4\). Applying thin quadrilaterals and Lemma~\ref{lem:quasihyp}, \(y\) is \(2\delta'\)-close to the side between \(x_1\) and \(x_2).\) In this case, we conclude that \(d(\lambda_1, z_2^-) \leq 7\delta'+\nu\).

    If such a point \(y\), does not exist, then \(d(\lambda_1, z_2^-) \leq 5\delta'+\nu\), which is also less than \(7\delta'+\nu\). Therefore we have that \(d(\lambda_1, z_2^-) \leq 7\delta'+\nu\). Repeating this argument for the second small quadrilateral gives the same result. 
    
    Therefore, 
    \begin{align*}
        d(\lambda_1, \lambda_3) &\leq d(\lambda_1, z_2^-) + d(z_2^-, z_2^+) + d(z_2^+,\lambda_3) \\
        &\leq 7\delta'+\nu + \nu + 4q + 7\delta'+\nu \\
        &= 14\delta' + 3\nu + 4q.
    \end{align*}
    \end{proof}
    
    Next we determine a four-point hyperbolicity constant using the result of Claim 2(b). As before, let \(p,q\) be points on \(\lambda_1\) and \(\lambda_3\) respectively so that 
    \(
    d(\lambda_1,\lambda_3) = d(p,q).
    \)
    Then,
    \begin{align*}
        L &= d(x_1,x_3) + d(x_2,x_4) \\
          &\leq d(x_1,p) + d(p,q) + d(q,x_3) + d(x_2,p) + d(p,q) + d(q,x_4) \\
          &\leq \ell(\lambda_1) + \ell(\lambda_3) + 4q + 2(14\delta' + 7\nu) \\
          &= \ell(\lambda_1) + \ell(\lambda_3) + 28\delta' + 14\nu + 4q.
    \end{align*}

    Also,
    \begin{align*}
        M &= d(x_1,x_2) + d(x_3,x_4) \\
          &\geq \ell(\lambda_1)-q + \ell(\lambda_3)-q \\
          &= \ell(\lambda_1) + \ell(\lambda_3) - 2q.
    \end{align*}

    As a consequence of Lemma~\ref{lem:thinquads}, taking \(\nu = 2\delta'\) ensures that for every \(4\)-tuple of points, \(\lambda_2\) and \(\lambda_4\) \(\nu\)--fellow-travel. 
    Hence, \(L \leq M + 28\delta' + 14\nu + 6q = M + 56\delta' + 6q\), so we conclude that:
    \begin{equation*}
        \delta = 56\delta' + 6q.
    \end{equation*}
\end{proof} 

To conclude this section, we restate Theorem~\ref{thmx:mainresult} and provide an overview of the proof.

\begin{thmx-non}[Quantitative Guessing Geodesics]
    Let \(X\) be a \((1,q)\)--quasi-geodesic metric space.
    Assume that for some constant \(D > 0\) there are \(D\)-coarsely connected paths \(\eta_{xy}  = \eta(x,y) : [0,1] \to X\) from \(x\) to \(y\), for each pair \(x,y \in X\). Suppose \(X\) also satisfies the following conditions:
    \begin{enumerate}[label=(G\arabic*)]
        \item The diameter \(\mathrm{diam}(\eta_{xy}) \leq \frac{D d(x,y)}{2}\),
        \item For any \(s \leq t\), \(d_{\mathrm{Haus}}(\eta_{xy}[s,t], \eta(\eta_{xy}(s),\eta_{xy}(t)) \leq D\),
        \item For any \(x,y,z \in X\), \(\eta_{xy} \subset \mathcal{N}_D(\eta_{xz} \cup \eta_{zy})\).
    \end{enumerate}
    
    Then every \((1,q)\)--quasi-geodesic triangle in \(X\) is \(\delta'\)-thin, and \(X\) is \((56\delta' + 6q)\)--hyperbolic, where \(\delta' \leq 72qD^{\frac{5}{4}} + 2D + 3q\).
\end{thmx-non}

\begin{proof}
    By our Guessing Geodesics theorem, (Proposition~\ref{prop:guessinggeodesics}), \(X\) is quasi-hyperbolic with constant \(\delta' = \delta'(q_1,q_2,D) = 2\left(\frac{q_1^2}{2}(2\kappa + D + q_2) + q_2 + \kappa\right) + D\). As we consider \((1,q)\)--quasi-geodesics, this gives \(\delta' = 4\kappa + 2D + 3q\).

    By the proof of Proposition~\ref{prop:guessinggeodesics}, we in fact have a family of values for \(\kappa\), which we call \(\kappa_n\) for \(n \in \mathbb{N}\). We proceed by taking close to minimal of the \(\kappa_n\) for the value of \(\kappa\). By Lemma~\ref{lem:kappa1}, \(\kappa_n = K_nqD^{1+ \frac{1}{log_2(n+8)}}\), where \(K_n\) is bounded by \(\log_2(n+8)+9+\lceil 1+\log_2(\log_2(n+8)+9) \rceil\). Taking \(n=8\) gives \(\kappa \leq \kappa_{8} = 18qD^\frac{5}{4}\). Hence, we have \(\delta' \leq 72qD^\frac{5}{4} + 2D + 3q\). Finally, by Lemma~\ref{lem:quasihyp}, \(X\) is hyperbolic with constant \(56\delta' + 6q\) by the four-point condition.
\end{proof}

\section{Application to the Curtain Model}

We devote the rest of this paper to an application of the quantitative Guessing Geodesics theorem which we discussed in the previous section. 

From here on, we work in the setting of the curtain model of \(\mathrm{CAT}(0)\) spaces introduced by Petyt--Spriano--Zalloum \cite{PSZ}. The curtain model, which we define later in this section, is a hyperbolic model of \(\mathrm{CAT}(0)\) spaces. Applying the quantitative Guessing Geodesics theorem allows us to estimate the hyperbolicity constant of this model explicitly and therefore ``how hyperbolic'' the space is.

An outline of our approach for this is as follows. We determine an numerical value for \(\kappa_n\) for any value of \(n \in \mathbb{N}\), which can be done once the values for \(q\) and \(D\) in the curtain model are known. These values for \(q,D\) will be calculated in Section~\ref{section:constants}. In Section~\ref{section:final}, the values of \(q,D\) and \(\kappa_n\) will determine a quasi-hyperbolicity constant which arises from the proof of the quantitative Guessing Geodesics theorem. Using the relation between the quasi-hyperbolicity and hyperbolicity constants derived in Section~\ref{section:quasi}, this in turn will provide us with a hyperbolicity constant.

\subsection{Background}
\label{section:curtain}

In this section, we follow the construction of the curtain model \((X,\mathsf{D})\) as established in \cite{PSZ}, and directly quote definitions from this source. For this, we require the definition of a curtain and chains of curtains which form the main components of the construction of this model.

\begin{defn}[{\cite[Definition~2.1]{PSZ}}]
    Let \(X\) be a \(\mathrm{CAT}(0)\) space, and \(\alpha: I \to X\) be a geodesic in \(X\). Let \(\pi_\alpha:X \to \alpha\) denote the closest point projection of \(X\) onto the geodesic \(\alpha\). For a number \(r\) with \(\left[r-\frac{1}{2},r+\frac{1}{2}\right]\) in the interior of \(I\), the \textbf{curtain dual to} \(\bm{\alpha}\) \textbf{at} \(\bm{r}\) is \(h_{\alpha,r} = \pi_\alpha^{-1}\left(\alpha\left(\left[r-\frac{1}{2},r+\frac{1}{2}\right]\right)\right).\)
\end{defn}

\begin{defn}[{\cite[Definition~2.2]{PSZ}}]
    Let \(X\) be a \(\mathrm{CAT}(0)\) space, and let \(h = h_{\alpha,r}\) be a curtain. The halfspaces determined by \(h\) are given by \(h^- = \pi_{\alpha}^{-1}\left(\alpha\left(I \cup \left(-\infty, r-\frac{1}{2}\right)\right)\right)\) and \(h^+ = \pi_{\alpha}^{-1}\left(\alpha\left(I \cup \left(r+\frac{1}{2}, \infty\right)\right)\right)\). Note that \(\{h^-, h, h^+\}\) is a partition of \(X\). If \(A\) and \(B\) are subsets of \(X\) such that \(A \subseteq h^-\) and \(B \subseteq h^+\), then we say that \(h\) \textbf{separates} \(A\) from \(B\).
    \end{defn}

\begin{defn}[{\cite[Definition~2.9]{PSZ}}]
    A \emph{chain of length} \(\bm{n}\) is a sequence of curtains \(c = \{h_i\}_{i=1}^n\) such that \(h_{i}\) separates \(h_{i-1}\) and \(h_{i+1}\) for all \(i = 1,\dots n-1\). We denote the length of a chain \(c\) by \(|c| = n\). We say that a chain \(\{h_i\}_{i=1}^n\) separates \(A,B \subseteq X\) if \(h_i\) separates \(A\) from \(B\) for all \(i=1, \dots, n\). The \emph{chain distance} between points \(x\) and \(y\), for \(x\) and \(y\) distinct points, is defined by
    \[
        d_{\infty}(x,y) = \max\{|c|: c \text{ is a chain separating } x \text{ and } y\} + 1.
    \]
\end{defn}


\begin{defn}[{\cite[Definition~2.11]{PSZ}}]
    Given \(L \in \mathbb{N}\), two disjoint curtains \(h_1, h_2\) are \(\bm{L}\)\emph{--separated} if any chain which meets both has length at most \(L\).
    We call a chain in which every adjacent pair of curtains is \(L\)-separated an \(\bm{L}\)\emph{--chain}.
\end{defn}

We may use this idea of a pairwise \(L\)-separated chain of curtains to form a notion of distance within a \(\mathrm{CAT}(0)\) space \(X\). In order to do this, we define a family of metric spaces \(X_L := (X, d_L)\) corresponding to \(L \in \mathbb{N}\).

\begin{defn}[{\cite[Definition~2.15]{PSZ}}]
    Given \(x,y \in X\), and \(L \in \mathbb{N}\), we define the \(\bm{L}\)\emph{--metric} by
    \[
    d_L(x,y) = 
    \begin{cases}
        \max\left\{ |c| : c \text{ is an } L\text{-chain sep. } x \text{ and } y \right\} + 1 &\quad \text{ if } x \neq y; \\
        0 &\quad \text { if } x=y.
    \end{cases}
    \]
\end{defn}

\begin{rem}
    The \(L\)-metric \(d_L\) can be shown to indeed be a metric in the usual way.  In addition, one can check that \(d_L(x,y) \leq d_{\infty}(x,y) \leq 1+d(x,y)\) for all \(L \in \mathbb{N}\).
\end{rem}

Therefore, we may think of these \(L\)-metrics as a family given by \(d_L\), where \(L\) takes values in \(\mathbb{N}\), with corresponding spaces \(X_L = (X, d_L)\). We combine these metrics into a single hyperbolic metric space \((X,\mathsf{D})\) as in \cite[Ch.~9]{PSZ}. This metric space is what is referred to as the curtain model of \(X\) throughout this paper, and is defined as below.

\begin{defn}
\label{defn:curtain}
    The \emph{curtain model of} \(X\), denoted by \((X, \mathsf{D})\), is defined by
    \(\mathsf{D}(x,y) = \sum_{L=1}^\infty \lambda_Ld_L(x,y)\), where \(\lambda_L\) is a sequence of numbers in \((0,1)\) such that 
    \[
    \sum_{L=1}^\infty \lambda_L < \sum_{L=1}^\infty L\lambda_L < \sum_{L=1}^\infty L^2\lambda_L =: \Lambda < \infty.
    \]
\end{defn}

By rescaling, we may assume that \(\Lambda := \sum_{L=1}^\infty L^2\lambda_L = 1\). Proceeding in this way will simplify various equations in the following section, so from here on we take \(\Lambda = 1\). 

\subsection{Determining Constants for \(\mathrm{CAT}(0)\) Spaces}
\label{section:constants}

Here we establish concrete values of \(q\) and \(D\) (as in \S2) for the curtain model of a \(\mathrm{CAT}(0)\) space.

To start, we require the definition of a reparametrisation map and an unparametrised rough geodesic. Recall from Section~\ref{section:quasi} that a \(\bm{q}\)\emph{--rough geodesic} is a \((1,q)\)--quasi-geodesic.

\begin{defn}
    A weakly increasing map \(r:I \to J\) is a \emph{reparametrisation map} for intervals \(I,J \subset \mathbb{R}\).
\end{defn}

\begin{defn}
    A path \(P:J \to X\) is an \emph{unparametrised rough geodesic} if there exists a reparametrisation map \(r : I \to J\) such that \(P \circ r\) is a rough geodesic.
\end{defn}

\begin{prop}[{\cite[Proposition~9.5]{PSZ}}]
    Let \(X\) be a \(\mathrm{CAT}(0)\) metric space. Then geodesics in \(X\) are unparametrised \(q\)–rough geodesics of the curtain model \(X_{\mathsf{D}}\), where \(q = \max\{6\Lambda,1\}+1\).
\end{prop}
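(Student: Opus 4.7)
The plan is to parametrise $\gamma$ by its $\mathsf{D}$-length instead of its CAT(0) length, and then verify that the resulting map is a $q$-rough geodesic in $(X,\mathsf{D})$. The upper bound on $\mathsf{D}$-distances is essentially built in: since $d_L(x,y)\leq d_\infty(x,y)\leq 1+d(x,y)$ and $\sum_L\lambda_L<\Lambda$, we already have $\mathsf{D}(\gamma(s),\gamma(t))\leq 1+|s-t|$ with the unit-speed CAT(0) parametrisation, so the whole content of the proof lies in producing the matching lower bound after reparametrising. This, in turn, will follow from an approximate additivity of $\mathsf{D}$ along $\gamma$.

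The first step is near-additivity: for $0\leq s\leq t\leq u\leq\ell$ I would prove
\[
\mathsf{D}(\gamma(s),\gamma(u))\ \ge\ \mathsf{D}(\gamma(s),\gamma(t))+\mathsf{D}(\gamma(t),\gamma(u))-C\Lambda
\]
for a small absolute constant $C$. Fix $L\in\mathbb{N}$ and take maximal $L$-chains $c_1$ separating $\gamma(s)$ from $\gamma(t)$ and $c_2$ separating $\gamma(t)$ from $\gamma(u)$. Because $\gamma$ is a CAT(0) geodesic, its closest-point projections to any geodesic are monotonic, so every curtain in $c_1\cup c_2$ separates $\gamma(s)$ from $\gamma(u)$; the concatenation $c_1\ast c_2$ is therefore a chain. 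The only place where $L$-separation can fail is at the junction, between the terminal curtains of $c_1$ and the initial curtains of $c_2$, which all cluster near $\gamma(t)$. A chain meeting two such curtains has length at most $L$, so trimming $O(L)$ curtains on each side of the junction yields an honest $L$-chain separating $\gamma(s)$ from $\gamma(u)$. This gives $d_L(\gamma(s),\gamma(u))\ge d_L(\gamma(s),\gamma(t))+d_L(\gamma(t),\gamma(u))-O(L)$; weighting by $\lambda_L$ and summing produces the above additivity with total loss at most $C\sum_L L\lambda_L<C\Lambda$.

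The second step is to define the reparametrisation. Let $f(s):=\mathsf{D}(\gamma(0),\gamma(s))$, a non-decreasing function with jumps bounded by $\sum_L\lambda_L<1$, and set $r(u):=\inf\{s:f(s)\ge u\}$ on $[0,f(\ell)]$. Then $r$ is weakly increasing, hence a legal reparametrisation map, and $\bar\gamma:=\gamma\circ r$ is the candidate rough geodesic. For $u\leq v$, writing $s=r(u)$ and $t=r(v)$: the triangle inequality and the built-in upper bound give $\mathsf{D}(\bar\gamma(u),\bar\gamma(v))\leq f(t)-f(s)+C\Lambda\leq v-u+C\Lambda+1$, using $|f(r(u))-u|<1$ from the one-unit discrepancy of the generalised inverse; near-additivity gives the reverse estimate $\mathsf{D}(\bar\gamma(u),\bar\gamma(v))\ge f(t)-f(s)-C\Lambda\ge v-u-C\Lambda-1$. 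Choosing the constants so that both sides fit within $|u-v|\pm q$ for $q=\max\{6\Lambda,1\}+1$ finishes the argument.

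The principal obstacle is the additivity step, and specifically pinning down the size of the trim needed at the junction. The key geometric input is that two curtains dual to sub-segments of $\gamma$ separated by a parameter window larger than $L$ are automatically $L$-separated; conversely, a chain meeting two curtains within a narrow window near $\gamma(t)$ is short. Making this quantitative — with a trim that is \emph{linear} in $L$ rather than anything worse — is what prevents the constant $q$ from blowing up and keeps the final dependence in the clean form $\max\{6\Lambda,1\}+1$.
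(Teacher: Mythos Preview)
Your approach is essentially the paper's: both rest on the near-additivity inequality $\mathsf{D}(x,y)\ge \mathsf{D}(x,z)+\mathsf{D}(z,y)-6\Lambda$ for $z\in[x,y]$ on a CAT(0) geodesic, and both then reparametrise $\gamma$ by its $\mathsf{D}$-distance from the initial point and verify the rough-geodesic bounds. The paper simply quotes the additivity inequality from \cite{PSZ} and spends its effort on the reparametrisation (choosing $Q(t)$ to be any point with $\mathsf{D}(Q(0),Q(t))\in[t,t+1]$ and doing a three-case analysis on the signs of $t_1,t_2$), whereas you use the generalised inverse of $f(s)=\mathsf{D}(\gamma(0),\gamma(s))$ and argue more uniformly; these are the same idea in slightly different packaging.

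The one place you go beyond the paper is in sketching the additivity inequality itself. The outline (concatenate maximal $L$-chains for the two sub-segments, then trim $O(L)$ curtains at the junction so the result is again an $L$-chain) is correct, but the justification you give for the concatenation being a chain --- ``closest-point projections to any geodesic are monotonic'' --- is not quite the right input: half-spaces of curtains are not convex in general, so it is not automatic that a curtain separating $\gamma(s)$ from $\gamma(t)$ also separates $\gamma(s)$ from $\gamma(u)$. What you actually need is the fact (established in \cite{PSZ}) that a CAT(0) geodesic meets any curtain in a single sub-interval, equivalently that once $\gamma$ leaves $h^-$ it does not return. With that in hand your trim argument goes through and recovers the constant $6\Lambda$.
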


\begin{proof}
    According to the proof provided in \cite{PSZ}, for \(x,y,z\) in a \(\mathrm{CAT}(0)\) geodesic, we have
    \begin{equation*}
    \label{eq:latex}
        \mathsf{D}(x,y) \geq \mathsf{D}(x,z) + \mathsf{D}(z,y) - 6\Lambda \hfill \tag{*}
    \end{equation*}
    where \(z \in [x,y]\) and \(\mathsf{D}\) is the metric defined in the curtain model. We expand on this proof to determine the value of \(q\).
    
    Let \(\alpha: I \to X\) be a geodesic in a \(\mathrm{CAT}(0)\) space \(X_{\mathsf{D}}\). We define maps as per the diagram below, where we aim to show that the image of \(P\) is an unparametrised rough geodesic in \(X_{\mathsf{D}}\), \(Q\) is a rough geodesic in \(X_{\mathsf{D}}\), and \(r:\mathbb{R} \to \mathbb{R}\) is a reparametrisation map. The map \(P\) is defined as the composition of an isometric embedding \(J \to (X, \mathsf{d})\), and the map into the curtain model.

    \begin{center}
        \begin{tikzcd}
            J \arrow[rd, "P"] \arrow[r, "\alpha"] &
            (X,\mathsf{d}) \arrow[d, "\mathrm{id}"] \\
            I \arrow[u,"r"] \arrow[r,"Q"] &
            (X,\mathsf{D})
        \end{tikzcd}
    \end{center}
    
     To prove the proposition, we show the following.
    \begin{enumerate}
        \item The image of the map \(Q: \mathbb{R} \to (X,\mathsf{D})\) is a rough geodesic in \(X_{\mathsf{D}}\).
        \item A reparametrisation map \(r:\mathbb{R} \to \mathbb{R}\) exists such that the diagram commutes.
        \item The set \(\mathrm{Im}(Q)\) is coarsely dense in \(\mathrm{Im}(P)\).
    \end{enumerate}

    Without loss of generality, assume \(0 \in I\) where \(\alpha: I \to X\). Let \(Q(0) = \alpha(0)\). For \(t>0\), let \(Q(t) \in P([0,\infty))\) such that \(\mathsf{D}(Q(0),Q(t)) \in [t,t+1]\). Similarly, for \(t < 0\), let \(Q(t) \in P((\infty, 0])\) satisfy \(\mathsf{D}(Q(t),Q(0)) \in [-t,-t+1]\). The point \(Q(t)\) exists for all \(t\), because \(\alpha\) is a geodesic and
    \[
     \mathsf{D}(x,y) = \sum_{L=1}^\infty \alpha_L d_L(x,y) \leq \sum_{L=1}^\infty \alpha_L\left(1+ \mathsf{d}(x,y)\right) \leq 1 + \mathsf{d}(x,y).
    \]

    We consider the first statement. Without loss of generality, let \(t_1 \leq t_2\) for \(t_1, t_2 \in \mathbb{R}\). It suffices to show that 
    \[
        \lvert\mathsf{D}(Q(t_1),Q(t_2)) - |t_1-t_2|\rvert \leq q
    \]
    where \(q\) is to be determined. There are three possible cases given by the order of \(0, t_1\) and \(t_2\).

    \vspace{0.5cm}
    \emph{Case 1:} \(0 \leq t_1 \leq t_2\). Note that
       \begin{align*} 
        \mathsf{D}(Q(0),Q(t_1)) + \mathsf{D}(Q(t_1),Q(t_2)) &\leq \mathsf{D}(Q(0),Q(t_2)) + 6\Lambda \hfill \text{ by \eqref{eq:latex}}\\
                      &\leq t_2+1+6\Lambda. 
        \end{align*}
    Therefore, we can determine an upper bound by
        \begin{align*}
        \mathsf{D}(Q(t_1),Q(t_2)) &\leq t_2+1+6\Lambda - \mathsf{D}(Q(0),Q(t_1)) \\
                                  &\leq (t_2-t_1)+6\Lambda +1.
        \end{align*}
    For the lower bound, we apply the triangle inequality to give
        \begin{align*}
            \mathsf{D}(Q(t_1),Q(t_2)) &\geq \mathsf{D}(Q(0),Q(t_2)) - \mathsf{D}(Q(0),Q(t_1)) \\
            &\geq t_2 - (t_1+1).
        \end{align*}
        Thus, 
        \(
             \mathsf{D}(Q(t_1),Q(t_2)) \approx t_2-t_1,
        \)
        up to a constant error of \(q=\max\{6\Lambda + 1,1\} = 6\Lambda +1\).
    
    \vspace{0.5cm}
    
    \emph{Case 2:} \(t_1 \leq 0 \leq t_2\). As before, 
    \begin{align*} 
        \mathsf{D}(Q(t_1),Q(t_2)) &\geq \mathsf{D}(Q(t_1),Q(0)) + \mathsf{D}(Q(0),Q(t_2)) - 6\Lambda \hfill \text{ by \eqref{eq:latex}}\\
                      &\geq (t_2-t_1) - 6\Lambda. 
    \end{align*}
    The upper bound for \(\mathsf{D}(Q(t_1),Q(t_2))\) is 
    \begin{align*}
        \mathsf{D}(Q(t_1),Q(t_2)) &\leq \mathsf{D}(Q(t_1),Q(0)) +\mathsf{D}(Q(0),Q(t_2)) \\
        &\leq (-t_1+1)+(t_2+1) = (t_2-t_1) + 2.
    \end{align*}
    In this case, 
    \(
             \mathsf{D}(Q(t_1),Q(t_2)) \approx t_2-t_1,
        \)
        up to a constant error of \(q=\max\{6\Lambda, 2\}\).

    \vspace{0.5cm}

    \emph{Case 3:} \(t_1 \leq t_2 \leq 0\).
        Working similar to Case 1 gives that \(\mathsf{D}(Q(t_1),Q(t_2)) \geq (t_2-t_1)-1\).
 
    We conclude that \(q=\max\{6\Lambda, 1\}+1\) satisfies all three cases. It remains to show the second statement, that there exists a reparametrisation \(r\) such that \(Q = P \circ r\).

    To do this, we wish to construct \(r'\) such that \(P = Q \circ r'\). We proceed with the construction of \(r'\) as follows. We have that \(P(0) = Q(0)\). Given \(s \in \mathbb{R}\), let \(t = \mathsf{D}(P(0), P(s))\). 
    From our construction of \(Q\), there exists \(t' \in \mathbb{R}\) with the same sign as \(s\) such that 
    \begin{equation*}
    \tag{**}
    \label{eqn:r'}
        \mathsf{D}(Q(0),Q(t')) \in [t,t+1].
    \end{equation*}
    
    This is because by the definition of \(Q\), we have that there exists at least one possibility for \(t'\) given by \(t'=t\). We construct \(r'\) by setting \(r'(s)\) equal to the minimal value of \(t'\) which satisfies \eqref{eqn:r'}.
    We claim that the distance \(\mathsf{D}(P(s),Q(t'))\) is uniformly bounded.
    
    To show this, note that by the definition of \(Q\), there exists \(s'\) such that \(Q(t') = P(s')\). If \(s < s'\), by \eqref{eq:latex} we have 
    \[
     \mathsf{D}(P(0),P(s)) + \mathsf{D}(P(s),P(s')) - 6\Lambda \leq \mathsf{D}(P(0),P(s')) = \mathsf{D}(Q(0),Q(t')) \in [t,t+1].
    \]
    Therefore, 
    \[
    t + \mathsf{D}(P(s),P(s')) - 6\Lambda \leq t+1,
    \text{ and so }
    \mathsf{D}(P(s),Q(t')) \leq 6\Lambda +1.
    \]

    Otherwise, if \(s \geq s'\),
    \begin{align*}
    t + \mathsf{D}(P(s'), P(s)) - 6\Lambda &\leq 
    \mathsf{D}(P(0), P(s')) + \mathsf{D}(P(s'), P(s)) - 6\Lambda \\ &\leq \mathsf{D}(P(0), P(s)) = t.
    \end{align*}
    Thus, \(\mathsf{D}(P(s),Q(t')) \leq 6\Lambda\). Combining both cases implies that for all \(s \in \mathbb{R}\), \(\mathsf{D}(P(s),P(s')) \leq 6\Lambda + 1\). Therefore  \(\mathsf{D}(P(s),P(s'))\) is uniformly bounded by \(6\Lambda+1\) and as \(Q(t') = P(s')\), \(\mathrm{Im}(Q)\) is \((6\Lambda+1)\)-dense in \(\mathrm{Im}(P)\). Since \(Q\) is a \((6\Lambda +1)\)-rough geodesic, this implies that \(P\) is an unparametrised \((2(6\Lambda +1))\)-rough geodesic, as required.
\end{proof}

Therefore, explicit values for the constants \(q\) and \(D\) are given by \(q = \max\{6\Lambda,1\}+1\) and \(D=125\Lambda\), where the latter is obtained by Lemma~9.9 in \cite{PSZ}. Since we are taking \(\Lambda=1\), we have that \(q=7\) and \(D=125\).

\subsection{Estimation of Hyperbolicity of Curtain Model}
\label{section:final}

Finally, we demonstrate a sample application of the quantitative guessing geodesics theorem to estimate the hyperbolicity constant of the curtain model.

As discussed in Section~\ref{section:curtain}, we take the scaling constant \(\Lambda = 1\) for the curtain model \((X, \mathsf{D})\) introduced in Definition~\ref{defn:curtain}. We deduce the following estimates for its hyperbolicity constant, using various approaches as detailed below.

Our main guessing geodesics result, Theorem~\ref{thmx:mainresult}, tells us that our quasi-hyperbolicity constant is \(\delta' \leq 72qD^\frac{5}{4} + 2D + 3q\) and our hyperbolicity constant is \(56\delta + 6q\). Substituting the values of \(q=7\) and \(D=125\) for the curtain model derived in the previous subsection gives that \(\delta' \leq 2.11 \times 10^5\), and therefore we deduce a hyperbolicity constant of \(\delta \leq 1.19 \times 10^7\). (Using this method involves the auxillary constant \(\kappa\) implicitly. In particular, we use the family of estimates \(\kappa_n\) for \(\kappa\) determined in Section~\ref{section:kappa} and take \(\kappa = \kappa_8 = 5.27 \times 10^4\).)

One may also determine, by plotting how \(\kappa_n\) varies with \(n\), that taking \(n = 2000\) is a rough estimate for \(\kappa = \inf(Z)\). This gives \(\kappa = \kappa_{2000} = 3.40 \times 10^4\). Applying \(\delta' = 4\kappa + 2D + 3q\) with this reduced value of \(\kappa\) yields a quasi-hyperbolicity constant of \(\delta' \leq 1.37 \times 10^5\) and by Lemma~\ref{lem:quasihyp}, a hyperbolicity constant of \(\delta \leq 7.64 \times 10^6\).

To evaluate these estimates for the curtain model, we compare the estimates derived by various values of \(\kappa_n\) to the values determined by substituting in the constants for the curtain model directly into the definition of \(f\) (where \((q_1,q_2) = (1,q)\)). In doing so, we may deduce that \(\kappa \leq 2.41 \times 10^3\). 
This results in a quasi-hyperbolicity constant of \(\delta' \leq 9.92 \times 10^3\) and a hyperbolicity constant of \(\delta \leq 5.56 \times 10^5\).

\begin{spacing}{1.25}
\bibliographystyle{alpha}
\bibliography{main}
\end{spacing}
\end{document}